\documentclass[oneside,a4paper,12pt,notitlepage]{article}
\usepackage[left=0.8in, right=0.8in, top=1.5in, bottom=1.5in]{geometry}
\usepackage[T1]{fontenc} 
\usepackage[utf8]{inputenc}
\usepackage{lipsum}
\usepackage{lmodern}
\usepackage{amssymb}
\usepackage{amsthm}
\usepackage{bm}
\usepackage{mathtools}
\usepackage{braket}
\usepackage{esint}
\newcommand{\abs}[1]{{\left|#1\right|}}
\newcommand{\norma}[1]{{\left\Vert#1\right\Vert}}

\usepackage{booktabs}
\usepackage{graphicx}
\usepackage{tikz}
\usetikzlibrary{patterns, math, intersections}
\usepackage{multicol}
\usepackage{caption}
\usepackage{enumerate}
\usepackage[skins,theorems]{tcolorbox}
\tcbset{highlight math style={enhanced,
		colframe=black,colback=white,arc=0pt,boxrule=1pt}}
\captionsetup{tableposition=top,figureposition=bottom,font=small}
\pagestyle{headings}

\def\XXint#1#2#3{{\setbox0=\hbox{$#1{#2#3}{\int}$}
    \vcenter{\hbox{$#2#3$}}\kern-.5\wd0}}

\theoremstyle{definition}

\theoremstyle{plain}
\newtheorem{teorema}{Theorem}[section]

\newtheorem{lemma}[teorema]{Lemma}
\newtheorem{prop}[teorema]{Proposition}

\theoremstyle{definition}
\newtheorem{esempio}{Example}[section]
\newtheorem{oss}[esempio]{Remark}

\renewcommand{\div}{\text{div}}

\DeclareMathOperator{\R}{\mathbb{R}}

\DeclareMathOperator{\diam}{\text{diam}}

\makeatletter
\newcommand{\myfootnote}[2]{\begingroup
	\def\@makefnmark{}%
	\addtocounter{footnote}{-1}%
	\footnote{\textbf{#1} #2}%
	\endgroup}
\makeatother

\newcommand{\meno}{\setminus}
\usepackage{comment}

\newcommand{\lnorma}[1]{{\left\vert\kern-0.25ex\left\vert\kern-0.25ex\left\vert #1 \right\vert\kern-0.25ex\right\vert\kern-0.25ex\right\vert}}
\theoremstyle{definition}
\newtheorem{open problem}{Open Problem}

\numberwithin{equation}{section}

\definecolor{brightpink}{rgb}{1.0, 0.0, 0.5}

\setlength{\marginparwidth}{2cm} 
\usepackage{todonotes}

\usepackage{hyperref}
\hypersetup{linktoc=none, bookmarksnumbered, colorlinks=true, linkcolor=red}

\title{Estimates for Robin $p$-Laplacian eigenvalues of convex sets with prescribed perimeter}
\author{Vincenzo Amato, Andrea Gentile, Alba Lia Masiello}
\date{\today}
\begin{document}
	\maketitle
	\begin{abstract} 
	In this paper, we prove an upper bound for the first Robin eigenvalue of the $p$-Laplacian with a positive boundary parameter and a quantitative version of the reverse Faber-Krahn type inequality for the first Robin eigenvalue of the $p$-Laplacian with negative boundary parameter, among convex sets with prescribed perimeter. 
	
	The proofs are based on a comparison argument obtained by means of inner sets, introduced by Payne, Weimberger \cite{paywie} and Polya \cite{polya1960}.
	
\textsc{MSC 2020:} 46E30, 35A23, 35J92.\\
\textsc{Keywords:} Rearrangements, $p$-Laplacian, Robin boundary conditions. 

\end{abstract}
\section{Introduction}

Let $\Omega$ be a bounded, open and convex set in $\R^n$. We consider the following problem
\begin{equation}
    \label{bpos}
    \begin{cases}
        -\Delta_p u=-\div(\abs{\nabla u}^{p-2}\nabla u)= \lambda_{p,\beta}(\Omega) \abs{u}^{p-2} u \, & \text{in} \, \, \Omega\\
        \displaystyle{\abs{{\nabla u}}^{p-2}\frac{\partial u}{\partial \nu}+ \beta \abs{u}^{p-2} u=0} \, & \text{on} \, \, \partial\Omega,
    \end{cases}
\end{equation}
where $\beta\in \R$.

The fundamental eigenvalue of the Robin Laplacian on $\Omega$ is defined by
\begin{equation}
    \label{rel}
    \lambda_{p,\beta}(\Omega)=\min_{\substack{v\in W^{1,p}(\Omega) \\v\neq 0}} \frac{\displaystyle{\int_\Omega \abs{\nabla v}^p\, dx+\beta \int_{\partial\Omega} \abs{v}^p\, d\mathcal{H}^{n-1}}}{\displaystyle{\int_\Omega \abs{v}^p\, dx}}
\end{equation}
and a minimizer $u$ in \eqref{rel} satisfies the equation \eqref{bpos} in the weak form if

\begin{equation*}
  \int _\Omega \abs{\nabla u}^{p-2} \nabla u \nabla \varphi \, dx+ \beta \int_{\partial\Omega}\abs{u}^{p-2}u\varphi \, d \mathcal{H}^{n-1} = \lambda_{p,\beta}(\Omega) \int_{\Omega} \abs{u}^{p-2} u \varphi \, dx, \quad \forall \varphi \in W^{1,p}(\Omega).
\end{equation*}
It is well-known (see for instance \cite{Lindqvist_on_the_equation} for the Dirichlet case) that the first eigenvalue \eqref{rel} is simple and that in the case of a ball, the corresponding eigenfunction is radially symmetric (\cite{Bhattacharya}).

In this paper we want to study the different behaviour of eigenvalues in the case $\beta>0$ and $\beta<0$. For sake of completeness, we recall that, in the case of $ \beta = 0 $, we recover Neumann boundary condition, for which the first eigenvalue is zero and the associated eigenfunctions are constants.

It is well known that in the case $\beta>0$, Bossel \cite{Bossel198647} and Daners \cite{Daners} proved a Faber-Krahn inequality for the first eigenvalue of the Robin-Laplacian in two and higher dimensional case, respectively. In particular, they proved that among sets of given volume, the one which minimizes the first Robin-eigenvalue is the ball, i.e.
\begin{equation}
    \label{Bossel_Daners}
    \lambda_{2,\beta}(\Omega^\sharp)\le \lambda_{2,\beta}(\Omega),
\end{equation}
where $\Omega^\sharp$ is the ball, centered at the origin, having the same volume as $\Omega$. This result was generalized by Bucur, Daners and Giacomini in \cite{bucdan, bg1, bg2} to the eigenvalues of the $p$-Laplacian with Robin boundary conditions. 

This Faber-Krahn inequality for fixed volume and the following rescaling property \cite{Bucur_Freitas_Kennedy}
\begin{equation}
    \label{risc}
    \lambda_{p,\beta}(t\Omega)\le \frac{1}{t}\lambda_{p,\beta}(\Omega)\leq\lambda_{p,\beta}(\Omega),  \quad \forall t>1,
\end{equation}
give a Faber-Krahn inequality for fixed perimeters, i.e.
\begin{equation*}
\lambda_{p,\beta}(\Omega^\star)\leq\lambda_{p,\beta}(\Omega),
\end{equation*}
where $\Omega^\star$ is the ball having the same perimeter as $\Omega$.

Our aim is to give a continuity bound to the ratio
\[
\frac{\lambda_{p,\beta}(\Omega)-\lambda_{p,\beta}(\Omega^\star)}{\lambda_{p,\beta}(\Omega)},
\]
indeed we prove

\begin{teorema}
    \label{teorema_1}
    Let $\beta$ be a positive parameter. Let $\Omega$ be a bounded, open and convex set in $\R^n$ and let $\Omega^\star$ be the ball, centered at the origin, such that $P(\Omega)=P(\Omega^\star)=\rho$. Let $\lambda_{p,\beta}(\Omega)$ and $\lambda_{p,\beta}(\Omega^\star)$ be the first eigenvalues of the $p$-Laplacian operator with Robin boundary conditions on $\Omega$ and $\Omega^\star$, let $v$ be a positive eigenfunction associated to $\lambda_{p,\beta}(\Omega^\star)$. Then
    \begin{equation}
        \label{beta_pos}
        \frac{\lambda_{p,\beta}(\Omega)-\lambda_{p,\beta}(\Omega^\star)}{\lambda_{p,\beta}(\Omega)} \leq C(n,p,\beta, \rho)\left(1- \frac{n^{\frac{n}{n-1}}\omega_n^{\frac{1}{n-1}}\abs{\Omega}}{P(\Omega)^{\frac{n}{n-1}}}\right),
    \end{equation}
    where $\omega_n$ is the measure of the unitary ball in $\R^n$, and $\displaystyle{C(n,p,\beta,\rho)= \frac{\norma{v}_\infty^p\abs{\Omega^\star}}{\norma{v}^p_p}}$.
\end{teorema}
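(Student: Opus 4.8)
The plan is to use the inner-set comparison method (Payne, Weinberger, P\'olya): we transplant the radial eigenfunction of the ball onto $\Omega$ and estimate the Rayleigh quotient \eqref{rel} from above. Write $\Omega^\star = B_R$, so that $\rho = P(B_R) = n\omega_n R^{n-1}$, and let $v(x) = \phi(\abs{x})$ be the positive eigenfunction on $B_R$ with $\phi\colon [0,R]\to(0,\infty)$. Since $P(\Omega) = P(\Omega^\star)$, the isoperimetric inequality gives $\abs{\Omega}\le\abs{\Omega^\star}$; moreover, combining $\abs{\Omega}\ge \frac{r_\Omega}{n}P(\Omega)$ (valid for convex sets, via an incenter) with $\abs{\Omega^\star} = \frac{R}{n}P(\Omega^\star)$ shows that the inradius satisfies $r_\Omega\le R$. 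We may therefore define the competitor $w(x) = \phi\bigl(R - d_\Omega(x)\bigr)$ for $x\in\Omega$, where $d_\Omega$ denotes the distance to $\partial\Omega$; then $w$ is Lipschitz and positive, hence admissible in \eqref{rel}, its trace on $\partial\Omega$ equals $\phi(R)$, and so $\beta\int_{\partial\Omega}\abs{w}^p\,d\mathcal H^{n-1} = \beta\phi(R)^p\rho = \beta\int_{\partial\Omega^\star}\abs{v}^p\,d\mathcal H^{n-1}$.

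For $0<t<r_\Omega$ put $\Omega_t = \{d_\Omega > t\}$, a convex open set with $\mathcal H^{n-1}(\{d_\Omega = t\}) = P(\Omega_t)$, and recall $\abs{\nabla d_\Omega} = 1$ a.e. Using the coarea formula,
\[
\int_\Omega\abs{\nabla w}^p\,dx = \int_{R-r_\Omega}^{R}\abs{\phi'(s)}^p P(\Omega_{R-s})\,ds,\qquad \int_\Omega\abs{w}^p\,dx = \int_{R-r_\Omega}^{R}\phi(s)^p P(\Omega_{R-s})\,ds,
\]
while $\int_{B_R}\abs{\nabla v}^p\,dx = \int_0^R\abs{\phi'(s)}^p n\omega_n s^{n-1}\,ds$, $\norma{v}_p^p = \int_0^R\phi(s)^p n\omega_n s^{n-1}\,ds$, and, again by coarea, $\int_0^{r_\Omega}P(\Omega_t)\,dt = \abs{\Omega}$.

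The key geometric step is the comparison of perimeters of inner parallel sets,
\[
P(\Omega_t)\le P(B_{R-t}) = n\omega_n(R-t)^{n-1},\qquad 0\le t\le r_\Omega .
\]
Indeed $\Omega_t + tB_1\subseteq\Omega$, so by monotonicity of the perimeter under inclusion of convex bodies and the Brunn--Minkowski inequality for surface area $P(K+L)^{1/(n-1)}\ge P(K)^{1/(n-1)} + P(L)^{1/(n-1)}$ (a consequence of the Aleksandrov--Fenchel inequalities), one has $P(\Omega)^{1/(n-1)}\ge P(\Omega_t)^{1/(n-1)} + (n\omega_n)^{1/(n-1)}t$; since $P(\Omega)^{1/(n-1)} = (n\omega_n)^{1/(n-1)}R$, this rearranges to the stated inequality. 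I expect this to be the heart of the matter, though it reduces to classical convex geometry.

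It remains to combine the estimates. Splitting $\int_0^R = \int_0^{R-r_\Omega} + \int_{R-r_\Omega}^{R}$ and applying the perimeter comparison termwise, the interior Dirichlet energy of $w$ is at most that of $v$ and the leftover integral on $[0,R-r_\Omega]$ is nonnegative, hence
\[
\int_\Omega\abs{\nabla w}^p\,dx + \beta\int_{\partial\Omega}\abs{w}^p\,d\mathcal H^{n-1}\le \lambda_{p,\beta}(\Omega^\star)\,\norma{v}_p^p .
\]
For the $L^p$ norm, bounding $\phi\le\norma{v}_\infty$, using the perimeter comparison once more, and $\int_0^{r_\Omega}P(\Omega_t)\,dt = \abs{\Omega}$, we get
\[
\int_\Omega\abs{w}^p\,dx\ge \norma{v}_p^p - \norma{v}_\infty^p\bigl(\abs{\Omega^\star} - \abs{\Omega}\bigr).
\]
If the right-hand side is positive, inserting $w$ into \eqref{rel} and rearranging yields $\dfrac{\lambda_{p,\beta}(\Omega) - \lambda_{p,\beta}(\Omega^\star)}{\lambda_{p,\beta}(\Omega)}\le \dfrac{\norma{v}_\infty^p(\abs{\Omega^\star} - \abs{\Omega})}{\norma{v}_p^p}$; if it is not, then that same right-hand side is $\ge 1$ whereas the left-hand side is $<1$ (as $\lambda_{p,\beta}(\Omega^\star) > 0$ for $\beta>0$), so the bound holds trivially. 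Finally one records the identity $1 - \dfrac{n^{n/(n-1)}\omega_n^{1/(n-1)}\abs{\Omega}}{P(\Omega)^{n/(n-1)}} = \dfrac{\abs{\Omega^\star} - \abs{\Omega}}{\abs{\Omega^\star}}$, which follows from $P(\Omega) = P(\Omega^\star)$ and $P(\Omega^\star)^{n/(n-1)} = (n\omega_n)^{n/(n-1)}R^n = n^{n/(n-1)}\omega_n^{1/(n-1)}\abs{\Omega^\star}$; this turns the previous inequality into exactly \eqref{beta_pos} with $C(n,p,\beta,\rho) = \norma{v}_\infty^p\abs{\Omega^\star}/\norma{v}_p^p$.
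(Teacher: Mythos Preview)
Your proof is correct and follows essentially the same strategy as the paper: the test function $w(x)=\phi(R-d_\Omega(x))$ is exactly the paper's $u(x)=G(d(x))$ once one unwinds the definition of $G$, and the estimates for the numerator, the boundary term, and the $L^p$-mass (together with the trivial case when the denominator becomes nonpositive) coincide with those in the paper. The only cosmetic difference is in the perimeter comparison: where the paper derives a differential inequality for $P(E_t)$ via Lemma~\ref{lemma_derivata_perimetro_2} and \eqref{Aleksandrov_Fenchel_W_2} and then invokes ODE comparison, you obtain $P(\Omega_t)\le n\omega_n(R-t)^{n-1}$ in one stroke from $\Omega_t+tB_1\subseteq\Omega$ and the Brunn--Minkowski inequality for surface area---a slightly more direct route to the same conclusion.
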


 It is possible to give a uniform bound to the constant in \eqref{beta_pos} from above with a constant independent of the parameter $\beta$ and the perimeter, indeed it holds 
 
\begin{equation}\label{conun}
    C(n,p,\beta,\rho)\le C(n,p):= \frac{\norma{v_\infty}_\infty^p\abs{\Omega^\star}}{\norma{v_\infty}^p_p},
\end{equation}  
 where $v_\infty$ is the first Dirichlet eigenfunction. Thanks to the rescaling property of Dirichlet eigenvalues and eigenfunctions, $C(n,p)$ does not depends on the perimeter. For more details see Remark \ref{rem1}.

 We observe that this result can be seen as a generalization to the Robin case of the result in \cite{BNT}, which holds true in the case of Dirichlet eigenvalues of the $p$-Laplacian.

 When $\beta$ is a negative parameter, the authors in \cite{BFNT} proved a reverse Faber-Krahn inequality for the first eigenvalue of the Dirichlet-Laplacian among convex sets of given perimeter. In particular, they proved that among convex sets of given perimeter the ball $\Omega^\star$ maximizes the first Robin eigenvalue of the $p$-Laplacian, i.e.
 \begin{equation}\label{refb}
     \lambda_{p,\beta}(\Omega)\le\lambda_{p,\beta}(\Omega^\star).
 \end{equation}
 For completeness' sake, we quote that in \cite{AFK}, the authors already proved that the disc maximizes the first eigenvalue under a perimeter constraint, among $C^2$ domains in $\R^2$, while the question remained open in arbitrary dimension.
 
This question is related to the conjecture of Bareket (see \cite{Bareket}) claiming that the ball maximizes $\lambda_{p,\beta}(\Omega)$ among all Lipschitz sets with given volume. Freitas and Krejčiřík in \cite{Freitas_Krejcirik} proved that the conjecture is false, giving a counter-example based on the asymptotic behaviour of the eigenvalues on a disc and an annulus of the same area when $\beta\to-\infty$. They also proved that among sets of area equal to $1$, the conjecture is true, provided $\beta$ is close to $0$.

In \cite{CL} the authors proved a quantitative version of the reverse Faber-Krahn \eqref{refb} in the case $p=2$ among convex sets of fixed perimeter following the Fuglede's approach introduced in \cite{Fuglede}.

In this paper, we recover the result in \cite{CL} obtaining a quantitative version of \eqref{refb} for all $p$, but using a different approach. Indeed, following the method introduced by Payne and Weinberger in \cite{paywie}, we establish a comparison using the so-called parallel coordinates method. In particular, we  firstly prove a lower bound in terms of perimeter and measure of $\Omega$, that is

\begin{teorema}
   \label{teo_beta_negative}
    Let $\beta$ be a negative parameter. Let $\Omega$ be a bounded, open and convex set in $\R^n$ and let $\Omega^\star$ be the ball, centered at the origin, such that $P(\Omega)=P(\Omega^\star)=\rho$. Let $\lambda_{p,\beta}(\Omega)$ and $\lambda_{p,\beta}(\Omega^\star)$ be the first eigenvalues of the $p$-Laplacian operator with Robin boundary conditions on $\Omega$ and $\Omega^\star$, let $v$ be a positive eigenfunction associated to $\lambda_{p,\beta}(\Omega^\star)$. Then
    \begin{equation}
        \label{beta_neg}
        \frac{\lambda_{p,\beta}(\Omega^\star) - \lambda_{p,\beta}(\Omega)}{\abs{\lambda_{p,\beta}(\Omega)}}
        \geq C(n,p,\beta,\rho) \left(1 - \frac{n^{\frac{n}{n-1}}\omega_n^{\frac{1}{n-1}}\abs{\Omega}}{P(\Omega)^{\frac{n}{n-1}}}\right),
    \end{equation}
    where $\omega_n$ is the measure of the unitary ball in $\R^n$, and $\displaystyle{C(n,p,\beta,\rho)= \frac{v_m^p \abs{\Omega^\star}}{\norma{v}^p_p}}$ with $\displaystyle{v_m =\min_{\Omega^\star} v}$.
\end{teorema}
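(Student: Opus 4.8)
The plan is to run the Payne--Weinberger ``inner sets'' (parallel coordinates) comparison: from the radial eigenfunction $v$ on $\Omega^\star$ I will build an admissible competitor for $\lambda_{p,\beta}(\Omega)$ by composing the radial profile of $v$ with the distance to $\partial\Omega$, and then compare the two Rayleigh quotients using isoperimetric information on the inner parallel sets of $\Omega$. First some reductions: since $\beta<0$, testing \eqref{rel} with constants gives $\lambda_{p,\beta}(\Omega)<0$ and $\lambda_{p,\beta}(\Omega^\star)<0$, so $\abs{\lambda_{p,\beta}(\Omega)}=-\lambda_{p,\beta}(\Omega)$ and, writing $\Omega^\star=B_R$ with $n\omega_nR^{n-1}=\rho$ (hence $\abs{\Omega^\star}=\rho^{n/(n-1)}n^{-n/(n-1)}\omega_n^{-1/(n-1)}$), the factor on the right of \eqref{beta_neg} equals $\mathcal{D}:=1-\abs{\Omega}/\abs{\Omega^\star}\in[0,1)$; thus \eqref{beta_neg} is equivalent to
\[
\lambda_{p,\beta}(\Omega)\,\bigl(1-C(n,p,\beta,\rho)\,\mathcal{D}\bigr)\le\lambda_{p,\beta}(\Omega^\star).
\]
Moreover the radial form of \eqref{bpos} forces the first eigenfunction to be $v(x)=\phi(\abs{x})$ with $\phi$ strictly increasing on $[0,R]$ (because $\lambda_{p,\beta}(\Omega^\star)<0$, the quantity $r^{n-1}\abs{\phi'}^{p-2}\phi'$ vanishes at $r=0$ and is increasing), so $v_m=\min_{\Omega^\star}v=\phi(0)$ and $\norma{v}_\infty=\phi(R)$.

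Next, set $d(x)=\operatorname{dist}(x,\partial\Omega)$, let $r_\Omega$ be the inradius, and for $t\ge0$ put $\Omega_t=\{d>t\}$, $P(t)=P(\Omega_t)$ (extended by $0$ for $t>r_\Omega$), $\mu(t)=\abs{\Omega_t}$. For a convex body, $\abs{\nabla d}=1$ a.e., $P$ is non-increasing with $P(0)=\rho$, $-\mu'=P$, and $\int_0^{r_\Omega}P(t)\,dt=\abs{\Omega}$; and since $B_{r_\Omega}\subseteq\Omega$ yields $n\omega_nr_\Omega^{n-1}=P(B_{r_\Omega})\le P(\Omega)=\rho$, we have $r_\Omega\le R$, so $\psi(t):=\phi(R-t)$ is well defined and decreasing on $[0,r_\Omega]$ with $\psi(t)\ge\phi(0)=v_m$ and $\psi(0)=\norma{v}_\infty$. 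With the Lipschitz competitor $w=\psi(d)\in W^{1,p}(\Omega)$, the coarea formula gives $\int_\Omega\abs{\nabla w}^p\,dx=\int_0^{r_\Omega}\abs{\psi'}^pP\,dt$, $\int_\Omega\abs{w}^p\,dx=\int_0^{r_\Omega}\abs{\psi}^pP\,dt$, and $\int_{\partial\Omega}\abs{w}^p\,d\mathcal{H}^{n-1}=\psi(0)^p\rho$ (here the prescribed perimeter enters: $\mathcal{H}^{n-1}(\partial\Omega)=\rho$). Hence, by \eqref{rel},
\[
\lambda_{p,\beta}(\Omega)\le\frac{N(\Omega)}{D(\Omega)},\qquad N(\Omega):=\int_0^{r_\Omega}\abs{\psi'}^pP\,dt+\beta\psi(0)^p\rho,\quad D(\Omega):=\int_0^{r_\Omega}\abs{\psi}^pP\,dt>0 .
\]
Running the same computation on $\Omega^\star$, where $v=\psi(\operatorname{dist}(\cdot,\partial\Omega^\star))$ and $v$ is the minimizer, gives $\lambda_{p,\beta}(\Omega^\star)=N^\star/D^\star$ with $N^\star,D^\star$ the analogous integrals on $[0,R]$ against $P^\star(t)=n\omega_n(R-t)^{n-1}$; note $N^\star=\lambda_{p,\beta}(\Omega^\star)\,D^\star<0$, $D^\star=\norma{v}_p^p$, and $\int_0^RP^\star\,dt=\abs{\Omega^\star}$.

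The crucial — and, I expect, hardest — step is the pointwise comparison $P(t)\le P^\star(t)$ for all $t\in[0,R]$; on $(r_\Omega,R]$ it is trivial, and on $[0,r_\Omega]$ I would derive it from the first-variation identity $-\tfrac{d}{dt}P(\Omega_t)=\int_{\partial\Omega_t}H\,d\mathcal{H}^{n-1}$ (total mean curvature of the convex body $\Omega_t$) together with the Minkowski-type quermassintegral inequality $\int_{\partial K}H\,d\mathcal{H}^{n-1}\ge(n-1)(n\omega_n)^{1/(n-1)}P(K)^{(n-2)/(n-1)}$ for convex $K$, with equality on balls: setting $r(t)=(P(t)/(n\omega_n))^{1/(n-1)}$ (so that $r(0)=R$), these two facts give $r'(t)\le-1$ for a.e.\ $t$, hence $r(t)\le R-t$ and $P(t)\le P^\star(t)$. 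The technical content here is the regularity of the parallel coordinate $t\mapsto P(\Omega_t)$ for convex bodies, the curvature-measure reading of the first variation, and the integration of the differential inequality; this is the only point at which convexity of $\Omega$ is genuinely used. Granting $P\le P^\star$, from $\abs{\psi'}^p\ge0$ we get $N(\Omega)\le N^\star$, and from $\abs{\psi(t)}^p\ge v_m^p$ and $P^\star-P\ge0$,
\[
D^\star-D(\Omega)=\int_0^R\abs{\psi}^p\,(P^\star-P)\,dt\ \ge\ v_m^p\int_0^R(P^\star-P)\,dt\ =\ v_m^p\bigl(\abs{\Omega^\star}-\abs{\Omega}\bigr)\ =\ v_m^p\,\abs{\Omega^\star}\,\mathcal{D},
\]
so $D(\Omega)\le\norma{v}_p^p-v_m^p\abs{\Omega^\star}\mathcal{D}=\norma{v}_p^p\bigl(1-C(n,p,\beta,\rho)\,\mathcal{D}\bigr)$ with $C(n,p,\beta,\rho)=v_m^p\abs{\Omega^\star}/\norma{v}_p^p$.

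To close: $\norma{v}_p^p\ge v_m^p\abs{\Omega^\star}$ gives $C(n,p,\beta,\rho)\le1$, and $\mathcal{D}<1$, so $1-C(n,p,\beta,\rho)\mathcal{D}>0$. Then, using $N(\Omega)\le N^\star<0$ and $0<D(\Omega)\le D^\star\bigl(1-C(n,p,\beta,\rho)\mathcal{D}\bigr)$ and the fact that a negative numerator decreases when the positive denominator shrinks,
\[
\lambda_{p,\beta}(\Omega)\ \le\ \frac{N(\Omega)}{D(\Omega)}\ \le\ \frac{N^\star}{D(\Omega)}\ \le\ \frac{N^\star}{D^\star\bigl(1-C(n,p,\beta,\rho)\mathcal{D}\bigr)}\ =\ \frac{\lambda_{p,\beta}(\Omega^\star)}{1-C(n,p,\beta,\rho)\mathcal{D}} .
\]
Multiplying by $1-C(n,p,\beta,\rho)\mathcal{D}>0$, rearranging with $\lambda_{p,\beta}(\Omega)<0$, and dividing by $\abs{\lambda_{p,\beta}(\Omega)}>0$ yields exactly \eqref{beta_neg}; when $\Omega$ is a ball one has $\mathcal{D}=0$ and this degenerates to the reverse Faber--Krahn inequality \eqref{refb}.
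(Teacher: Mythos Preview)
Your proof is correct and follows essentially the same parallel-coordinates strategy as the paper: the competitor $w=\phi(R-d(\cdot))$ coincides with the paper's $u=v_M-G(d(\cdot))$, and your key comparison $P(\Omega_t)\le P(\Omega^\star_t)$ is exactly what the paper obtains by combining Lemma~\ref{lemma_derivata_perimetro_1} with the Aleksandrov--Fenchel inequality~\eqref{Aleksandrov_Fenchel_W_2}. The only point worth noting is that the paper derives the differential inequality $-\tfrac{d}{dt}P(\Omega_t)\ge n(n-1)W_2(\Omega_t)$ directly from the set inclusion $\Omega_t+\rho B\subset\Omega_{t-\rho}$ and the Steiner formula, which sidesteps the curvature-measure regularity issues you flag in your sketch.
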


In this case, the constant $C(n,p,\beta,\rho)$ cannot be replaced by a constant independent of $\beta$ and of the perimeter, as it is shown in Remark \ref{modbes}.

Then we prove the quantitative result as in \cite{CL}.
\begin{teorema}
    \label{teo_quantitativa_con_Hausdorff}
    Let $n\geq 2$, $\rho > 0$ and $\beta< 0$. Then there exists two positive constants $C(n,p,\beta,\rho) >0$ and $\delta_0(n,p,\beta,\rho) >0$, such that, for all $\Omega \subset \R^n$ bounded and convex with $P(\Omega) = \rho$ and $\lambda_{p,\beta}(\Omega^\star)-\lambda_{p,\beta}(\Omega) \leq \delta_0$, it holds 
    \begin{equation}
        \lambda_{p,\beta}(\Omega^\star)-\lambda_{p,\beta}(\Omega) \geq C(n,p,\beta , \rho) g(\mathcal{A}_{\mathcal{H}}^{\star}(\Omega))
    \end{equation}
    where $\Omega^\star$ is a ball with the same perimeter of $\Omega$, $\mathcal{A}_{\mathcal{H}}^{\star}$ is the Hausdorff asymmetry defined in \eqref{deff_A_H_star} and $g$ is defined in \eqref{function_g}.
\end{teorema}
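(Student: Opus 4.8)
The plan is to chain together two ingredients that are already available: Theorem~\ref{teo_beta_negative}, which converts the spectral gap into the isoperimetric deficit, and the stability version of the isoperimetric inequality \emph{for convex bodies measured through the Hausdorff distance}, i.e.\ Fuglede's estimate \cite{Fuglede} (the function $g$ of \eqref{function_g} being precisely its modulus). Throughout set $D(\Omega):=1-n^{\frac{n}{n-1}}\omega_n^{\frac{1}{n-1}}\abs{\Omega}\,P(\Omega)^{-\frac{n}{n-1}}\ge 0$, the scale–invariant isoperimetric deficit. \textbf{Step 1 (the spectral gap controls $D(\Omega)$ from below).} For $\beta<0$ the constant test function $v\equiv 1$ in \eqref{rel} gives $\lambda_{p,\beta}(\Omega^\star)\le \beta\rho/\abs{\Omega^\star}<0$, while the reverse Faber--Krahn inequality \eqref{refb} (itself contained in Theorem~\ref{teo_beta_negative}, as $D\ge 0$) gives $\lambda_{p,\beta}(\Omega)\le\lambda_{p,\beta}(\Omega^\star)<0$, whence $\abs{\lambda_{p,\beta}(\Omega)}\ge\abs{\lambda_{p,\beta}(\Omega^\star)}>0$, a quantity depending only on $n,p,\beta,\rho$. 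Inserting this bound into Theorem~\ref{teo_beta_negative} yields
\begin{equation*}
\lambda_{p,\beta}(\Omega^\star)-\lambda_{p,\beta}(\Omega)\ \ge\ \widetilde C(n,p,\beta,\rho)\,D(\Omega),\qquad \widetilde C:=C(n,p,\beta,\rho)\,\abs{\lambda_{p,\beta}(\Omega^\star)}.
\end{equation*}

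\textbf{Step 2 (reduction to the near–ball regime).} By Step~1 the hypothesis $\lambda_{p,\beta}(\Omega^\star)-\lambda_{p,\beta}(\Omega)\le\delta_0$ forces $D(\Omega)\le\delta_0/\widetilde C$. By the qualitative stability of the isoperimetric inequality for convex sets (Bonnesen/Fuglede), $D(\Omega)\to 0$ forces $\mathcal{A}_{\mathcal{H}}^{\star}(\Omega)\to 0$ as well; hence, choosing $\delta_0=\delta_0(n,p,\beta,\rho)>0$ small enough, one may assume $\mathcal{A}_{\mathcal{H}}^{\star}(\Omega)$ is as small as is required to enter the range of validity of Fuglede's quantitative estimate. (Here I keep $P(\Omega)=\rho$ throughout and match Fuglede's normalization by scaling, which only affects the constants.)

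\textbf{Step 3 (Fuglede's estimate and conclusion).} In the near–ball regime just isolated, Fuglede's stability estimate \cite{Fuglede} for convex sets of perimeter $\rho$ provides a constant $c=c(n,\rho)>0$ with $D(\Omega)\ge c\,g\!\left(\mathcal{A}_{\mathcal{H}}^{\star}(\Omega)\right)$. Combining with Step~1,
\begin{equation*}
\lambda_{p,\beta}(\Omega^\star)-\lambda_{p,\beta}(\Omega)\ \ge\ \widetilde C\,D(\Omega)\ \ge\ \widetilde C\,c\,g\!\left(\mathcal{A}_{\mathcal{H}}^{\star}(\Omega)\right),
\end{equation*}
which is the assertion with $C(n,p,\beta,\rho)=\widetilde C\,c$ and $\delta_0$ as fixed in Step~2.

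\textbf{Main obstacle.} The bookkeeping above is routine; the genuine difficulty is the input of Step~3, namely the stability of the isoperimetric inequality controlling the \emph{sup}-type deviation $\mathcal{A}_{\mathcal{H}}^{\star}$ — rather than an $L^1$/Fraenkel asymmetry — by the deficit $D$. Convexity is indispensable here: it forbids thin spikes, so that the extremal configuration is a chopped spherical cap, and it is this configuration that pins down the dimension–dependent exponent hidden inside $g$. One therefore has to quote, or reprove along Fuglede's lines (as done in \cite{CL} for $p=2$), this estimate and carefully track its scaling under the constraint $P(\Omega)=\rho$; a minor further care point is that $\mathcal{A}_{\mathcal{H}}^{\star}$, being an infimum over balls, must be handled consistently with the translations implicit in centring $\Omega^\star$ at the origin in Theorem~\ref{teo_beta_negative}.
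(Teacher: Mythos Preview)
Your proposal is correct and follows essentially the same route as the paper: convert the spectral gap into the isoperimetric/measure deficit via Theorem~\ref{teo_beta_negative}, use the smallness assumption to enter the near-ball regime, and then invoke Fuglede's Hausdorff-stability of the isoperimetric inequality (which the paper packages as Lemma~\ref{lemma_fugl_mod}). The only cosmetic difference is that you bound $\abs{\lambda_{p,\beta}(\Omega)}$ from below by $\abs{\lambda_{p,\beta}(\Omega^\star)}$ via the reverse Faber--Krahn inequality, whereas the paper uses the test $v\equiv 1$ directly on $\Omega$ to get $\abs{\lambda_{p,\beta}(\Omega)}\ge\abs{\beta}P(\Omega)/\abs{\Omega}$ and then the isoperimetric inequality; both yield constants depending only on $n,p,\beta,\rho$.
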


The paper is organized as follows: in Section \ref{section_notion} we recall some preliminary results and useful tools for our aim; in Section \ref{sec3} we provide the proof of our main results.

\section{Notations and Preliminaries}
\label{section_notion}
Throughout this article, $|\cdot|$ will denote the Euclidean norm in $\mathbb{R}^n$,
while $\cdot$ is the standard Euclidean scalar product for  $n\geq2$. By $\mathcal{H}^k(\cdot)$, for $k\in [0,n)$, we denote the $k$-dimensional Hausdorff measure in $\mathbb{R}^n$.
 
The measure and the perimeter of $\Omega$ in $\mathbb{R}^n$ will be denoted by $\abs{\Omega}$ and  $P(\Omega)$, respectively, and, if $P(\Omega)<\infty$, we say that $\Omega$ is a set of finite perimeter. In our case, $\Omega$ is a bounded, open and convex set; this ensures us that $\Omega$ is a set of finite perimeter and that $P(\Omega)=\mathcal{H}^{n-1}(\partial\Omega)$. Moreover, if $\Omega$ is an open set with Lipschitz boundary, it holds

\begin{teorema}[Coarea formula]
    Let $f:\Omega\to\R$ be a Lipschitz function and let $u:\Omega\to\R$ be a measurable function. Then,
    \begin{equation}
        \label{coarea}
        \int_{\Omega} u(x) \lvert \nabla f(x) \rvert \, dx=\int _{\mathbb {R} }dt\int_{(\Omega\cap f^{-1}(t))}u(y)\, d\mathcal{H}^{n-1}(y).
    \end{equation}
\end{teorema}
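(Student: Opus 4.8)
The plan is to insert the lower bound of Theorem~\ref{teo_beta_negative} into a quantitative isoperimetric inequality for convex bodies measured through the Hausdorff asymmetry, following the scheme of \cite{CL}. For a bounded convex $\Omega$ with $P(\Omega)=\rho$, write
\[
\delta(\Omega):=1-\frac{n^{\frac{n}{n-1}}\omega_n^{\frac{1}{n-1}}\abs{\Omega}}{P(\Omega)^{\frac{n}{n-1}}}\ge 0
\]
for the normalized isoperimetric deficit, so that $\delta(\Omega)=0$ exactly when $\Omega$ is a ball. First I would record a uniform lower bound for $\abs{\lambda_{p,\beta}(\Omega)}$ depending only on $n,p,\beta,\rho$: since $\beta<0$, the constant test function in \eqref{rel} gives $\lambda_{p,\beta}(\Omega)\le \beta P(\Omega)/\abs{\Omega}<0$, and the reverse Faber--Krahn inequality \eqref{refb} gives $\lambda_{p,\beta}(\Omega)\le\lambda_{p,\beta}(\Omega^\star)<0$; hence
\[
\abs{\lambda_{p,\beta}(\Omega)}\ \ge\ \abs{\lambda_{p,\beta}(\Omega^\star)}\ =:\ m_0(n,p,\beta,\rho)>0 .
\]
Moreover the constant $C(n,p,\beta,\rho)=v_m^p\abs{\Omega^\star}/\norma{v}_p^p$ appearing in Theorem~\ref{teo_beta_negative} is a fixed strictly positive, finite number, because $v$ is the positive (continuous, non-vanishing up to $\partial\Omega^\star$) first eigenfunction on the fixed ball $\Omega^\star$. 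Feeding these facts into \eqref{beta_neg} yields
\[
\lambda_{p,\beta}(\Omega^\star)-\lambda_{p,\beta}(\Omega)\ \ge\ C(n,p,\beta,\rho)\,\abs{\lambda_{p,\beta}(\Omega)}\,\delta(\Omega)\ \ge\ \widetilde{C}(n,p,\beta,\rho)\,\delta(\Omega),
\]
with $\widetilde C:=C\,m_0>0$.

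Next I would use the smallness hypothesis only to place $\Omega$ in the perturbative regime. Indeed the last display shows $\delta(\Omega)\le \delta_0/\widetilde C$, so by taking $\delta_0$ small we may assume $\delta(\Omega)\le\eta$, where $\eta=\eta(n,\rho)$ is the threshold in the convex quantitative isoperimetric inequality invoked below; this is the sole role of the hypothesis $\lambda_{p,\beta}(\Omega^\star)-\lambda_{p,\beta}(\Omega)\le\delta_0$, and it is genuinely needed, since a small isoperimetric deficit controls the Hausdorff distance to a ball only for convex sets and only near the ball.

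Finally I would invoke the quantitative isoperimetric inequality for convex sets in terms of the Hausdorff asymmetry — this is precisely the content encoded by $g$ in \eqref{function_g} and $\mathcal{A}_{\mathcal{H}}^{\star}$ in \eqref{deff_A_H_star}, and it is the step borrowed from the convex-geometry literature used in \cite{CL}: there are $c(n,\rho)>0$ and $\eta(n,\rho)>0$ such that every bounded convex $\Omega$ with $P(\Omega)=\rho$ and $\delta(\Omega)\le\eta$ satisfies
\[
\delta(\Omega)\ \ge\ c(n,\rho)\,g\!\left(\mathcal{A}_{\mathcal{H}}^{\star}(\Omega)\right).
\]
Chaining this with the estimate $\lambda_{p,\beta}(\Omega^\star)-\lambda_{p,\beta}(\Omega)\ge\widetilde C\,\delta(\Omega)$ (legitimate because $\delta_0\le\widetilde C\,\eta$ guarantees $\delta(\Omega)\le\eta$) gives
\[
\lambda_{p,\beta}(\Omega^\star)-\lambda_{p,\beta}(\Omega)\ \ge\ \widetilde C(n,p,\beta,\rho)\,c(n,\rho)\,g\!\left(\mathcal{A}_{\mathcal{H}}^{\star}(\Omega)\right),
\]
which is the claim with $C(n,p,\beta,\rho)=\widetilde C\,c$ and $\delta_0=\widetilde C\,\eta$.

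The only substantial point is this last step: isolating the right modulus $g$ and proving the convex quantitative isoperimetric inequality with a threshold $\eta$ and constant $c$ depending on $n$ (and the fixed scale $\rho$) alone, in particular with the dimension-dependent sharp exponent hidden in $g$ (quadratic in the plane, weaker in higher dimension). The remaining ingredients are soft: an elementary sign/monotonicity argument for $\abs{\lambda_{p,\beta}(\Omega)}$ and a direct substitution into Theorem~\ref{teo_beta_negative}. A minor bookkeeping check is that the deficit on the right-hand side of \eqref{beta_neg} is literally the quantity $\delta(\Omega)$ fed into the isoperimetric inequality, so that the two estimates compose without loss.
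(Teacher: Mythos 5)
Your proposal does not address the statement you were asked to prove. The statement is the coarea formula
\[
\int_{\Omega} u(x)\,\lvert \nabla f(x)\rvert \, dx=\int_{\mathbb{R}}dt\int_{\Omega\cap f^{-1}(t)}u(y)\, d\mathcal{H}^{n-1}(y),
\]
a classical measure-theoretic identity for Lipschitz functions, which the paper states without proof and attributes to the standard references on sets of finite perimeter. What you have written instead is an argument for the quantitative reverse Faber--Krahn inequality of Theorem \ref{teo_quantitativa_con_Hausdorff}: you take the lower bound of Theorem \ref{teo_beta_negative}, bound $\abs{\lambda_{p,\beta}(\Omega)}$ from below, use the smallness hypothesis to enter the perturbative regime, and invoke a convex quantitative isoperimetric inequality in the Hausdorff asymmetry. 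None of this has any bearing on the coarea formula; the eigenvalue $\lambda_{p,\beta}$, the deficit $\delta(\Omega)$, and the asymmetry $\mathcal{A}_{\mathcal{H}}^{\star}$ do not even appear in the statement to be proved.

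A proof of the coarea formula would proceed along entirely different lines: one first establishes the identity for $C^1$ functions $f$ with nonvanishing gradient via the implicit function theorem and the area formula, then extends to general Lipschitz $f$ by approximation and by handling the set $\{\nabla f=0\}$ (whose level sets contribute zero to both sides by Sard-type arguments in the a.e.\ sense), and finally passes from $u=\chi_E$ to general measurable $u$ by linearity and monotone convergence. Since the paper itself treats this as a quoted background result, the expected answer here is either that standard argument or an explicit citation; your text, while it may be a reasonable sketch for Theorem \ref{teo_quantitativa_con_Hausdorff}, is not a proof of the coarea formula.
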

Some references for results relative to the sets of finite perimeter and for the coarea formula are, for instance \cite{AFP, maggi_sets_finite_perimeter}.

\subsection{Asymmetry index $\mathcal{A}^\star(E)$}
In the case $\beta<0$, we want to prove a quantitative result  so we need an index that gives us information about the shape of $\Omega$. We will consider the Hausdorff asymmetry index, as already done in \cite{CL}, so we recall some basic notions about the Hausdorff distance. 

We recall that if $E,F$ are any two convex sets in  $\mathbb{R}^n$ the Hausdorff distance between $E, F$ is defined as
\[
d_{\mathcal{H}}(E,F) := \inf\left\{ \varepsilon>0 \, : \,  E \subset F + \varepsilon B , \, F \subset E + \varepsilon B  \right\},
\]
where $B$ is the unitary ball centered at the origin and $F + \varepsilon B$ is the well-known Minkowski sum.
For such set we define two isoperimetric deficit 
\begin{equation}
    \mathcal{D}(E):= P(E)- P(E^{\sharp}) , \quad\mathcal{M}(E):= \abs{E^\star} - \abs{E}
\end{equation}
where $E^{\sharp}$ and $E^{\star}$ are the ball with the same measure and the same perimeter of $E$, respectively. 

Moreover, we will consider the following Hausdorff asymmetry indices
\begin{equation}
    \label{deff_A_H_star}
    \mathcal{A}_{\mathcal{H}}^\star(E)= \min_{x \in \R^{n}} \left\{d_{\mathcal{H}}(E, B_r(x)) ,\, P(\Omega)= P(B_r(x)) \right\}
\end{equation}
and
\begin{equation}
    \label{deff_A_H_sharp}
    \mathcal{A}_{\mathcal{H}}^\sharp(E)=\min_{x \in \R^{n}} \left\{d_{\mathcal{H}}(E, B_r(x)) ,\, \abs{\Omega}= \abs{B_r(x)} \right\}.
\end{equation}

Lemma 2.9 in \cite{GLPT} tells us how these two indices are related one to the other
\begin{lemma}
    \label{gloria}
    Let $n \geq 2 $ and let $E \subset \R^n$ be a bounded, convex, with $\mathcal{D}(E) \leq \delta$  then 
    \begin{equation}
        \mathcal{A}_H^{\star}(E) \leq C(n)\mathcal{A}_H^{\sharp}(E).
    \end{equation}
\end{lemma}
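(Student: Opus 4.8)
The plan is to reduce the inequality to a one-dimensional comparison between the two reference radii; since both sides of the claimed inequality are scale-invariant distances, nothing is lost. Denote by $r_\sharp$ and $r_\star$ the radii fixed by $\abs{B_{r_\sharp}}=\abs{E}$ and $P(B_{r_\star})=P(E)$, so that $E^\sharp$ (resp. $E^\star$) is a translate of $B_{r_\sharp}$ (resp. $B_{r_\star}$); by the isoperimetric inequality $P(E)\ge P(E^\sharp)$, hence $r_\star\ge r_\sharp$. Let $x_0\in\R^n$ realize the minimum in \eqref{deff_A_H_sharp} (it exists by compactness, as $d_{\mathcal H}(E,B_{r_\sharp}(x))\to\infty$ when $\abs x\to\infty$) and set $\alpha:=\mathcal{A}_{\mathcal H}^{\sharp}(E)=d_{\mathcal H}(E,B_{r_\sharp}(x_0))$. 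Since $B_{r_\star}(x_0)$ is admissible in \eqref{deff_A_H_star} and $d_{\mathcal H}(B_{r_\sharp}(x_0),B_{r_\star}(x_0))=r_\star-r_\sharp$, the triangle inequality for the Hausdorff distance gives
\[
\mathcal{A}_{\mathcal H}^{\star}(E)\le d_{\mathcal H}(E,B_{r_\star}(x_0))\le d_{\mathcal H}(E,B_{r_\sharp}(x_0))+(r_\star-r_\sharp)=\alpha+(r_\star-r_\sharp),
\]
so everything reduces to showing $r_\star-r_\sharp\le C(n)\,\alpha$.

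For the radius gap I would argue as follows. From $d_{\mathcal H}(E,B_{r_\sharp}(x_0))=\alpha$ one gets the inclusion $E\subseteq B_{r_\sharp+\alpha}(x_0)$. Both sets are convex, and the perimeter is monotone under inclusion of convex bodies: the nearest-point projection onto $E$ restricts to a surjective $1$-Lipschitz map $\partial B_{r_\sharp+\alpha}(x_0)\to\partial E$ (surjectivity by following outer normals of $E$), which cannot increase $\mathcal H^{n-1}$-measure. Hence $P(E)\le P(B_{r_\sharp+\alpha}(x_0))$, i.e. $n\omega_n r_\star^{n-1}\le n\omega_n(r_\sharp+\alpha)^{n-1}$, which forces $r_\star\le r_\sharp+\alpha$. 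Substituting into the display above yields $\mathcal{A}_{\mathcal H}^{\star}(E)\le 2\,\mathcal{A}_{\mathcal H}^{\sharp}(E)$, so the statement holds — in fact with $C(n)=2$.

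The only non-elementary ingredients are the monotonicity of the perimeter under convex inclusion and the identity $d_{\mathcal H}(B_r(x),B_R(x))=\abs{R-r}$; both are classical and I would simply cite them (see e.g. \cite{AFP, maggi_sets_finite_perimeter}). I expect no serious obstacle: the argument above does not even use the hypothesis $\mathcal D(E)\le\delta$, which is presumably retained to match how the lemma is applied downstream, or because the original proof in \cite{GLPT} routes the radius gap through the deficits instead, writing $\mathcal M(E)=\abs{E^\star}-\abs{E}=\omega_n(r_\star^n-r_\sharp^n)$ and linearising $r\mapsto r^n$ near $r_\sharp$; in that variant the smallness of $\alpha$ relative to $r_\sharp$ (equivalently, a smallness assumption on the normalised deficit) is what keeps the constant of the form $C(n)$, via the elementary bound $(1+t)^n-1\le 2^n n\,t$ for $t\in[0,1]$, and controlling that linearisation would be the main point to check carefully if one prefers to stay literally within that framework.
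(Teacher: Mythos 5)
Your argument is correct, and it is worth noting that the paper does not prove this lemma at all: it is imported as a black box from \cite{GLPT} (Lemma 2.9 there), so there is no internal proof to compare against. Your route is elementary and self-contained: the triangle inequality $\mathcal{A}_{\mathcal{H}}^{\star}(E)\le d_{\mathcal{H}}(E,B_{r_\sharp}(x_0))+(r_\star-r_\sharp)$ is valid because $B_{r_\star}(x_0)$ is admissible in \eqref{deff_A_H_star} and concentric balls satisfy $d_{\mathcal{H}}(B_r(x),B_R(x))=\abs{R-r}$; and the radius gap is controlled correctly, since $d_{\mathcal{H}}(E,B_{r_\sharp}(x_0))=\alpha$ forces $\overline{E}\subseteq \overline{B_{r_\sharp+\alpha}(x_0)}$, whence monotonicity of the surface area of convex bodies under inclusion gives $n\omega_n r_\star^{n-1}=P(E)\le n\omega_n(r_\sharp+\alpha)^{n-1}$ and so $r_\star-r_\sharp\le\alpha$ (using $n-1\ge 1$). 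This yields the stronger conclusion $\mathcal{A}_{\mathcal{H}}^{\star}(E)\le 2\,\mathcal{A}_{\mathcal{H}}^{\sharp}(E)$ with the dimension-free constant $2$ and without the hypothesis $\mathcal{D}(E)\le\delta$, which, as you correctly guess, plays no logical role in this direction of the comparison and is retained only to match the downstream application (Lemma \ref{lemma_fugl_mod}, where the smallness of the deficit is genuinely used to invoke Theorem \ref{fuglede}). The only ingredients you should make sure to justify or cite are the attainment of the minimum in \eqref{deff_A_H_sharp} (immediate, since $x\mapsto d_{\mathcal{H}}(E,B_{r_\sharp}(x))$ is $1$-Lipschitz and proper) and the perimeter monotonicity for nested convex bodies, which is classical (nearest-point projection, or monotonicity of mixed volumes); both are fine as stated.
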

With these definitions, we can recall the quantitative isoperimetric inequality proved in \cite{Fuglede,Fusco}.

\begin{teorema}[Fuglede]
\label{fuglede}
    Let $n \geq 2$, and let $E$ be a bounded open and convex set with $ \abs{E}= \omega_n$. There exists $\delta, C$, depending only on $n$, such that if $\mathcal{D}(E) \leq \delta$ then 
    then
    \begin{equation}
    \label{eq_Fuglede}
        \mathcal{D}(E) \geq C g \bigl( \mathcal{A}_{\mathcal{H}}^{\sharp}(E) \bigr),
    \end{equation}
    where $g$ is defined by
    \begin{equation}
    \label{function_g}
        g(s)=
        \begin{cases}
            s^2 & \text{ if } n=2\\
            f^{-1}(s^2) & \text{ if } n=3\\
            s^{\frac{n+1}{2}} & \text{ if } n\geq 4
        \end{cases}
    \end{equation}
    and $f(t)= \sqrt{t \log(\frac{1}{t})}$ for $0 < t < e^{-1}$.
\end{teorema}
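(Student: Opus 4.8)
\emph{Proof strategy.} This is the quantitative isoperimetric inequality for convex (nearly spherical) sets of Fuglede \cite{Fuglede} (see also \cite{Fusco}), so I only outline the argument. The plan has three steps: reduce to nearly spherical sets, run a second‑order Taylor expansion of the perimeter functional, and convert the resulting $H^{1}$–estimate into the Hausdorff asymmetry. For the reduction, one shows that if $\mathcal{D}(E)$ is small and $E$ is convex with $\abs{E}=\omega_{n}$, then, after a translation, $\partial E$ is a small normal graph over the unit sphere,
\[ \partial E = \left\{ (1+u(\omega))\,\omega \, : \, \omega \in \mathbb{S}^{n-1} \right\}, \qquad \norma{u}_{C^{1}(\mathbb{S}^{n-1})} \le \eta\big(n,\mathcal{D}(E)\big), \]
with $\eta(n,\cdot)\to 0$ as $\mathcal{D}(E)\to 0$: a Blaschke–selection argument shows that any sequence of convex sets with vanishing deficit converges, up to translation, to the unique minimiser $B_{1}$, and convexity upgrades this Hausdorff convergence to the quantitative $C^{1}$ bound above. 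I would then fix the translation so that the first spherical harmonics of $u$ vanish, i.e. $\int_{\mathbb{S}^{n-1}} u(\omega)\,\omega\,d\mathcal{H}^{n-1}(\omega)=0$.

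Next I would expand. Writing $P(E)=\int_{\mathbb{S}^{n-1}}(1+u)^{n-1}\sqrt{1+(1+u)^{-2}\abs{\nabla_{\tau} u}^{2}}\,d\mathcal{H}^{n-1}$ and $\abs{E}=\tfrac1n\int_{\mathbb{S}^{n-1}}(1+u)^{n}\,d\mathcal{H}^{n-1}$, expanding to second order, and using the constraint $\abs{E}=\omega_{n}$ to eliminate the linear term $\int_{\mathbb{S}^{n-1}}u$, one obtains
\[ \mathcal{D}(E) \ge \frac12\left(\int_{\mathbb{S}^{n-1}}\abs{\nabla_{\tau} u}^{2}\,d\mathcal{H}^{n-1} - (n-1)\int_{\mathbb{S}^{n-1}}u^{2}\,d\mathcal{H}^{n-1}\right) - C(n)\,\norma{u}_{C^{1}}\,\norma{u}_{H^{1}(\mathbb{S}^{n-1})}^{2}. \]
Since the eigenvalues of $-\Delta_{\mathbb{S}^{n-1}}$ are $0<n-1<2n<\dots$, since the $0$‑th mode of $u$ is $O(\norma{u}_{L^{2}}^{2})$ by the volume constraint, and since the modes with eigenvalue $n-1$ vanish by the normalisation, the quadratic form in the bracket is bounded below by $c(n)\,\norma{u}_{H^{1}(\mathbb{S}^{n-1})}^{2}$; absorbing the cubic error into the left‑hand side (this is where the $C^{1}$–smallness, hence the threshold $\delta$, is used) yields $\mathcal{D}(E)\ge c(n)\,\norma{u}_{H^{1}(\mathbb{S}^{n-1})}^{2}$.

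Finally I would convert this into the asymmetry. Since $\mathcal{A}_{\mathcal{H}}^{\sharp}(E)\le d_{\mathcal{H}}(E,B_{1})\le\norma{u}_{L^{\infty}(\mathbb{S}^{n-1})}$, it suffices to bound $\norma{u}_{L^{\infty}}$ by $\norma{u}_{H^{1}}$ using the a priori $C^{1}$–smallness, and this is exactly where the modulus $g$ is produced. For $n=2$ one has $H^{1}(\mathbb{S}^{1})\hookrightarrow C^{0}$, hence $\mathcal{D}(E)\gtrsim\norma{u}_{\infty}^{2}\ge\big(\mathcal{A}_{\mathcal{H}}^{\sharp}(E)\big)^{2}=g(\mathcal{A}_{\mathcal{H}}^{\sharp}(E))$. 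For $n\ge 4$, $H^{1}(\mathbb{S}^{n-1})$ is supercritical for $L^{\infty}$, but by the Lipschitz bound the superlevel set $\{\abs{u}>\tfrac12\norma{u}_{\infty}\}$ is a spherical cap of angular radius $\gtrsim\norma{u}_{\infty}$, and optimising the thickness of the transition layer gives $\norma{u}_{H^{1}(\mathbb{S}^{n-1})}^{2}\gtrsim\norma{u}_{\infty}^{(n+1)/2}$, hence $\mathcal{D}(E)\gtrsim\big(\mathcal{A}_{\mathcal{H}}^{\sharp}(E)\big)^{(n+1)/2}=g(\mathcal{A}_{\mathcal{H}}^{\sharp}(E))$. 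For $n=3$ one is at the borderline: the limiting (Moser--Trudinger type) estimate on $\mathbb{S}^{2}$ gives $\norma{u}_{\infty}^{2}\lesssim\norma{u}_{H^{1}}^{2}\log\big(1/\norma{u}_{H^{1}}\big)$ for small $u$, i.e. $f(\mathcal{D}(E))\gtrsim\big(\mathcal{A}_{\mathcal{H}}^{\sharp}(E)\big)^{2}$ with $f$ as in the statement, and since $f$ is increasing near $0$ this inverts to $\mathcal{D}(E)\gtrsim f^{-1}\big((\mathcal{A}_{\mathcal{H}}^{\sharp}(E))^{2}\big)=g(\mathcal{A}_{\mathcal{H}}^{\sharp}(E))$. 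In all cases this gives \eqref{eq_Fuglede}.

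I expect the last step to be the main obstacle: passing from the $L^{2}$‑type information $\mathcal{D}(E)\gtrsim\norma{u}_{H^{1}}^{2}$ (the natural output of the expansion, which also produces the extremal "pushed‑in cap" configurations) to the $L^{\infty}$‑type Hausdorff asymmetry loses a dimensional power and is logarithmically critical in dimension three, and pinning down the precise $g$ requires the sharp Sobolev embedding of $H^{1}(\mathbb{S}^{n-1})$ together with the a priori Lipschitz smallness of $u$ — the latter itself requiring the quantitative compactness argument of Step one, which genuinely uses convexity. The Taylor‑expansion/spectral‑gap core of Step two is, by comparison, routine.
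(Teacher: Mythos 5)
This theorem is not proved in the paper: it is imported verbatim from Fuglede \cite{Fuglede} (see also \cite{Fusco}) and used as a black box in Lemma \ref{lemma_fugl_mod}, so there is no in-paper argument to compare yours against. Your outline does reproduce the architecture of Fuglede's original proof for convex bodies: reduction of a convex set with small deficit to a nearly spherical set written as a radial graph $(1+u)\omega$ over $\mathbb{S}^{n-1}$ with small $C^1$ norm, the second-order expansion of perimeter and volume combined with the spectral gap of $-\Delta_{\mathbb{S}^{n-1}}$ (the volume constraint and the barycentre normalisation removing the modes with eigenvalues $0$ and $n-1$) to get $\mathcal{D}(E)\gtrsim \norma{u}_{H^1(\mathbb{S}^{n-1})}^2$, and finally the dimension-dependent passage from $\norma{u}_{H^1}$ to the $L^\infty$-type Hausdorff asymmetry, which is indeed where the three regimes of $g$ (subcritical, critical, supercritical Sobolev embedding) originate.

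The one place where your sketch, taken literally, does not deliver the stated modulus is the conversion step for $n\ge 4$. The mechanism you invoke --- the Lipschitz bound forces $\{\abs{u}>\tfrac12\norma{u}_\infty\}$ to contain a cap of angular radius $\gtrsim\norma{u}_\infty$ --- only yields $\norma{u}_{L^2}^2\gtrsim\norma{u}_\infty^{\,n+1}$ and hence $\mathcal{D}(E)\gtrsim \mathcal{A}_{\mathcal{H}}^{\sharp}(E)^{n+1}$, which is strictly weaker than the exponent $\tfrac{n+1}{2}$. The sharp exponent (and likewise the logarithmic modulus for $n=3$) comes from Fuglede's quantitative convexity estimate $\norma{\nabla u}_{\infty}^2\le C(n)\norma{u}_\infty$ --- geometrically, a dent of depth $m$ in a convex body close to the unit ball must have width at least of order $\sqrt{m}$ --- combined with an interpolation between $\norma{\nabla u}_{L^2}$ and $\norma{\nabla u}_{L^\infty}$; this is consistent with the extremal ``flat cut'' $B\cap\{x_n<1-\eps\}$, for which $\mathcal{D}\sim\eps^{\frac{n+1}{2}}$ while $\norma{u}_{L^2}^2$ is of higher order. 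Your phrase ``optimising the thickness of the transition layer'' points at the right answer but is not justified by the Lipschitz bound alone; the convexity lemma is the missing ingredient. Since the result is cited rather than proved here, this affects only the completeness of your sketch, not the paper's logic.
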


We are interested in a modified version of this theorem, in terms of $\mathcal{M}(\Omega)$, so we have
\begin{lemma}
    \label{lemma_fugl_mod}
     Let $\Omega \subset \R^n$ be a bounded, open and convex set and let $\Omega^{\star}$ be the ball satisfying $P(\Omega)=P(\Omega^\star)=\rho$. Then, there exist $\delta,C$, depending only on $n$ and $\rho$, such that, if
    \begin{equation}
        \label{Hp_Lemma}
        \mathcal{M}(\Omega) = \abs{\Omega^{\star}} - \abs{\Omega} \leq \delta
    \end{equation}
    then
    \[
    \mathcal{M}(\Omega) \geq C g \bigl( \mathcal{A}_{\mathcal{H}}^{\star}(\Omega) \bigr)
    \]
    where $g$ is the function defined in \eqref{function_g}.
\end{lemma}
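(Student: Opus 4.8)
The strategy is to deduce the statement from Fuglede's quantitative isoperimetric inequality (Theorem \ref{fuglede}) and from Lemma \ref{gloria}; the only real work is to convert the perimeter-normalized deficit $\mathcal{M}(\Omega)$ into the volume-normalized deficit $\mathcal{D}(\Omega)$ and to absorb the resulting dilations into the function $g$. I would start from the elementary geometry of the two deficits. Writing $\abs{\Omega}=\omega_n a^n$ and $P(\Omega)=n\omega_n b^{n-1}$, we have $\Omega^\sharp=B_a$ and $\Omega^\star=B_b$, so that $b=(\rho/(n\omega_n))^{1/(n-1)}$ depends only on $n$ and $\rho$, while $a\le b$ by the isoperimetric inequality. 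The identities $\mathcal{M}(\Omega)=\omega_n(b^n-a^n)$ and $\mathcal{D}(\Omega)=n\omega_n(b^{n-1}-a^{n-1})$ then give, merely because $b\ge a$, the pointwise bound $\mathcal{M}(\Omega)\ge \tfrac{a}{n}\,\mathcal{D}(\Omega)$. Moreover, if $\mathcal{M}(\Omega)\le \delta_0:=\tfrac12\abs{\Omega^\star}$ then $\abs{\Omega}\ge\tfrac12\abs{\Omega^\star}$, hence $a\ge a_0:=(\abs{\Omega^\star}/2\omega_n)^{1/n}$ with $a_0=a_0(n,\rho)>0$, and therefore $\mathcal{D}(\Omega)\le \tfrac{n}{a_0}\mathcal{M}(\Omega)$: smallness of $\mathcal{M}$ forces smallness of $\mathcal{D}$.

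Next I would normalize the volume. Set $\tilde\Omega:=a^{-1}\Omega$, so that $\abs{\tilde\Omega}=\omega_n$, and record the scaling rules $\mathcal{D}(\tilde\Omega)=a^{1-n}\mathcal{D}(\Omega)$, $\mathcal{M}(\tilde\Omega)=a^{-n}\mathcal{M}(\Omega)$, $\mathcal{A}_{\mathcal{H}}^{\star}(\tilde\Omega)=a^{-1}\mathcal{A}_{\mathcal{H}}^{\star}(\Omega)$ and $\mathcal{A}_{\mathcal{H}}^{\sharp}(\tilde\Omega)=a^{-1}\mathcal{A}_{\mathcal{H}}^{\sharp}(\Omega)$ (the asymmetry indices are $1$-homogeneous since the Hausdorff distance is, and the perimeter and volume constraints scale consistently). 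By the previous step, choosing $\delta\le\delta_0$ small enough in terms of $n$ and $\rho$ we can force $\mathcal{D}(\tilde\Omega)$ below the thresholds of Theorem \ref{fuglede} and Lemma \ref{gloria} simultaneously. Theorem \ref{fuglede} then gives $\mathcal{D}(\tilde\Omega)\ge C_1\,g\!\left(\mathcal{A}_{\mathcal{H}}^{\sharp}(\tilde\Omega)\right)$ and Lemma \ref{gloria} gives $\mathcal{A}_{\mathcal{H}}^{\sharp}(\tilde\Omega)\ge C_2^{-1}\mathcal{A}_{\mathcal{H}}^{\star}(\tilde\Omega)$ with $C_1,C_2$ dimensional; combining these with $\mathcal{M}(\tilde\Omega)\ge\tfrac1n\mathcal{D}(\tilde\Omega)$ (the first-step inequality applied to $\tilde\Omega$, whose $\Omega^\sharp$-radius equals $1$) and the monotonicity of $g$, we obtain $\mathcal{M}(\tilde\Omega)\ge \tfrac{C_1}{n}\,g\!\left(C_2^{-1}\mathcal{A}_{\mathcal{H}}^{\star}(\tilde\Omega)\right)$.

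Finally I would undo the scaling. Since $a_0\le a\le b$, we get $\mathcal{M}(\Omega)=a^n\mathcal{M}(\tilde\Omega)\ge a_0^n\mathcal{M}(\tilde\Omega)$ and $\mathcal{A}_{\mathcal{H}}^{\star}(\tilde\Omega)=a^{-1}\mathcal{A}_{\mathcal{H}}^{\star}(\Omega)\ge b^{-1}\mathcal{A}_{\mathcal{H}}^{\star}(\Omega)$, so it only remains to absorb the constants $C_2^{-1}$ and $b^{-1}$ inside $g$. For this I would use that for every $\kappa\in(0,1]$ there is $c_\kappa>0$ with $g(\kappa s)\ge c_\kappa\,g(s)$ on the whole domain of $g$ (for $\kappa\ge1$ this is just monotonicity): if $n=2$ or $n\ge4$ this is immediate because $g$ is a power; if $n=3$, writing $g=f^{-1}(\,\cdot^{\,2})$ with $f(t)=\sqrt{t\log(1/t)}$ and using that $g(s)<e^{-1}$, hence $\log(1/g(s))>1$, a short estimate shows $f\!\left(c_\kappa g(s)\right)\le \kappa^2 f\!\left(g(s)\right)$ for $c_\kappa$ small, whence $g(\kappa s)\ge c_\kappa g(s)$. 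Chaining the inequalities of the last two steps yields $\mathcal{M}(\Omega)\ge C(n,\rho)\,g\!\left(\mathcal{A}_{\mathcal{H}}^{\star}(\Omega)\right)$, which is the assertion.

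I expect the main obstacle to be essentially bookkeeping: one must fix the thresholds in the right order — first $\delta_0$ to bound $a$ from below, then the final $\delta$ so that the rescaled deficit lands inside the admissible ranges of Theorem \ref{fuglede} and Lemma \ref{gloria} — so that every constant really depends only on $n$ and $\rho$ and not on the eliminated quantity $a$ or on $\delta$ itself. The only genuinely non-formal point is the scaling estimate $g(\kappa s)\ge c_\kappa g(s)$ in dimension $n=3$, where the logarithmic factor in $f$ has to be handled, but as indicated this is elementary.
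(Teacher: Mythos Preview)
Your proposal is correct and follows essentially the same route as the paper: bound $\abs{\Omega}$ from below via $\mathcal{M}(\Omega)\le\delta$, normalize to unit volume, apply Theorem~\ref{fuglede} and Lemma~\ref{gloria}, and undo the scaling. The paper converts $\mathcal{D}$ into $\mathcal{M}$ via Bernoulli's inequality on $(1+\gamma g)^{n/(n-1)}$ rather than your elementary bound $\mathcal{M}\ge\tfrac{a}{n}\mathcal{D}$, and it is considerably more terse on the rescaling step, simply writing ``the general case will be recovered by rescaling''; your explicit treatment of the homogeneity of the asymmetry indices and of the estimate $g(\kappa s)\ge c_\kappa g(s)$ (in particular for $n=3$) fills in details that the paper leaves to the reader.
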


\begin{proof}
    Let us start by setting $\delta < \frac{\abs{\Omega^\star}}{2}$, so
    \begin{equation}
        \label{misura_Omega_star_controllata_con_Omega_star}
        \abs{\Omega} > \frac{\abs{\Omega^{\star}}}{2}
    \end{equation}
    and let us assume $\abs{\Omega}= \omega_n$.
    
    By \eqref{misura_Omega_star_controllata_con_Omega_star} and the differentiability of  the function $h(t)=t^{\frac{n-1}{n}}$, we have
    \[
    \abs{\Omega^\star}^{\frac{n-1}{n}}-\abs{\Omega}^{\frac{n-1}{n}}= h'(\xi)  \left(\abs{\Omega^\star}-\abs{\Omega}\right) \leq \frac{n-1}{\omega_n^{\frac{1}{n}} n}\left(\abs{\Omega^\star}-\abs{\Omega}\right).
    \]
    Hence, if $\mathcal{M}(\Omega) \leq \delta$, then
    \[
    \abs{\Omega^\star}^{\frac{n-1}{n}}-\abs{\Omega}^{\frac{n-1}{n}}=\frac{P(\Omega)}{n \omega^{\frac{1}{n}}}-\abs{\Omega}^{\frac{n-1}{n}} = \frac{\mathcal{D}(\Omega)}{n \omega^{\frac{1}{n}}}\leq \tilde{\delta}(n,\rho).
    \]
    So we can apply \eqref{eq_Fuglede}
    \[
    P(\Omega) \geq n \omega_n  \Bigl( 1+\gamma(n) g \bigl( \mathcal{A}_{\mathcal{H}}^{\sharp} (\Omega) \bigr) \Bigr)
    \]
    to obtain
    \begin{align*}
        \abs{\Omega^{\star}} - \abs{\Omega} & = \frac{P(\Omega)^{\frac{n}{n-1}}}{ n^{\frac{n}{n-1}} \omega_n^{\frac{1}{n-1}} } - \abs{\Omega} \\[0.5ex]
        & \geq \omega_n \left(1+\gamma(n) g\bigl( \mathcal{A}_{\mathcal{H}}^{\sharp} (\Omega) \bigr) \right)^{\frac{n}{n-1}} - \omega_n \\[0.5ex]
        & \geq \frac{n \omega_n \gamma(n) }{n-1} g \bigl( \mathcal{A}_{\mathcal{H}}^{\sharp} (\Omega) \bigr),
    \end{align*}
    where in the last step we used Bernoulli's inequality
    \[
    (1+x)^r \geq 1+rx \qquad x \geq -1, \, r\geq 1.
    \]
    Applying Lemma \ref{gloria} we finally have
    \[
    \abs{\Omega^{\star}} - \abs{\Omega} \geq C(n) g \bigl( \mathcal{A}_{\mathcal{H}}^{\star}(\Omega) \bigr).
    \]
    
    The general case  will be recovered by rescaling and by the following inequality for $Y = \omega_n \abs{\Omega}^{-\frac{1}{n}} \Omega$
    \begin{equation}
        \abs{Y^{\star}} - \abs{Y} \leq 2 \omega_n^n \abs{\Omega^\star}^{-1}\left(\abs{\Omega^{\star}} - \abs{\Omega}\right).\qedhere
    \end{equation}
\end{proof}

\subsection{Quermassintegrals}
Let us recall some basic facts about convex sets. Let $K \subset \R^n$ be a non-empty, bounded, convex set, let $B$ be the unitary ball centered at the origin and $\rho > 0$. We can write the Steiner formula for the Minkowski sum $K+ \rho B$ as
\begin{equation}
    \label{Steiner_formula}
    \abs{K + \rho B} = \sum_{i=0}^n \binom{n}{i} W_i(K) \rho^{i} .
\end{equation}
The coefficients $W_i(K)$ are known in literature as quermassintegrals of $K$. In particular, $W_0(K) = \abs{K}, nW_1(K) = P(K)$ and $W_n(K) = \omega_n$ where $\omega_n$ is the measure of $B$.

If $K$ has $C^2$ boundary, the quermassintegrals can be written in terms of principal curvatures of $K$. More precisely, denoting with $H_j$ the $j$-th normalized elementary symmetric function of the principal curvature $\kappa_1, \ldots, \kappa_{n-1}$ $\partial K$, i.e.
\[
H_0 = 1 \qquad \qquad H_j = \binom{n-1}{j}^{-1} \sum_{1 \leq i_1 < \ldots < i_j \leq n-1} \kappa_{i_1} \ldots \kappa_{i_j} \qquad j = 1,\ldots,n-1,
\]
then the quermassintegrals can be written as
\begin{equation}
    \label{quermass_con_curvature}
    W_i(K) = \frac{1}{n} \int_{\partial K} H_{i-1} \, d \mathcal{H}^{n-1} \qquad i = 1,\ldots, n.
\end{equation}
Moreover, the Steiner formula holds true also for quermassintegrals, that is
\begin{equation}
    W_j(K+\rho B) = \sum_{i=0}^{n-j} \binom{n-j}{i} W_{j+i}(K) \rho^i \qquad j=0, \ldots, n-1.
\end{equation}
For $j=1$ we have
\begin{align*}
    P(K+\rho B_1) &= n \sum_{i=0}^{n-1} \binom{n-1}{i} W_{i+1}(K) \rho^i \\
    & = P(K) + n(n-1)W_2(K)\rho +\ldots + nW_n(K) \rho^n,
\end{align*}
from which follows
\begin{equation}
    \label{derivata_perimetro}
    \lim_{\rho \to 0} \frac{P(K+\rho B) - P(K)}{\rho} = n(n-1) W_2(K),
\end{equation}
and if $\partial K$ is of class $C^2$ formula \eqref{quermass_con_curvature} gives
\[
\lim_{\rho \to 0} \frac{P(K+\rho B) - P(K)}{\rho} = (n-1) \int_{\partial K}H_1 \, d\mathcal{H}^{n-1}.
\]
Furthermore, Aleksandrov-Fenchel inequalities hold true
\begin{equation}
    \label{Aleksandrov_Fenchel_inequalities}
    \biggl( \frac{W_j(K)}{\omega_n} \biggr)^{\frac{1}{n-j}} \geq \biggl( \frac{W_i(K)}{\omega_n} \biggr)^{\frac{1}{n-i}} \qquad 0 \leq i < j \leq n-1,
\end{equation}
where equality hold if and only if $K$ is a ball. When $i=0$ and $j=1$, formula \eqref{Aleksandrov_Fenchel_inequalities} reduce to the classical isoperimetric inequality, i.e.
\[
P(K) \geq n \omega_n^{\frac{1}{n}} \abs{K}^{\frac{n-1}{n}}.
\]
In the following we will use \eqref{Aleksandrov_Fenchel_inequalities} for $i=1$ and $j=2$, that is
\begin{equation}
    \label{Aleksandrov_Fenchel_W_2}
    W_2(K) \geq n^{-\frac{n-2}{n-1}} \omega_n^{\frac{1}{n-1}} P(K)^{\frac{n-2}{n-1}}.
\end{equation}

\subsection{Some useful lemmas}
Let $\Omega \subset \R^n$ be a convex set, for $t \in [0,r_{\Omega}]$ we denote by
\[
\Omega_t = \Set{ x \in \Omega : d(x)>t } 
\]
where $d(x)$ is the distance of $x \in \Omega$ from the boundary of $\Omega$ and $r_{\Omega}$ is the inradius of $\Omega$. By the Brunn-Minkowski Theorem (\cite[Theorem 7.4.5]{Schneider_2013}) and the concavity of the distance function, the map
\[
t \mapsto P(\Omega_t)^{\frac{1}{n-1}}
\]
is concave in $[0,r_{\Omega}]$, hence absolutely continuous in $(0,r_{\Omega})$. Moreover, there exists its right derivative at $0$ and it is negative, since $P(\Omega_t)^{\frac{1}{n-1}}$ is strictly monotone decreasing.

\begin{lemma}
    \label{lemma_derivata_perimetro_1}
    Let $\Omega$ be a bounded, convex, open set in $\R^n$. Then for almost every $t \in (0,r_{\Omega})$
    \begin{equation}
    \label{eq_lemma_1}
        -\frac{d}{dt} P(\Omega_t) \geq n(n-1) W_2(\Omega_t)
    \end{equation}
    and the equality holds if $\Omega$ is a ball.
\end{lemma}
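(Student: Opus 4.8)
The idea is to compare, for a fixed $t\in(0,r_\Omega)$, the convex set $\Omega_t$ with its outer parallel sets, and to exploit that the inner parallel sets of $\Omega$ "open up" at least as fast as the flat (curvature-aware) Steiner expansion predicts. First I would record that the map $s\mapsto \Omega_{t-s}$, for small $s>0$, satisfies $\Omega_t + sB \subset \Omega_{t-s}$: indeed if $x\in\Omega_t$ and $|y|\le 1$ then $d(x+sy)\ge d(x)-s > t-s$, since the distance function is $1$-Lipschitz. Consequently $P(\Omega_{t-s}) \ge P(\Omega_t + sB)$, because enlarging a convex set increases its perimeter (monotonicity of perimeter under inclusion of convex bodies). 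This is the geometric heart of the argument.

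Next I would combine this inclusion with the Steiner formula for the perimeter of an outer parallel set, which is recorded just above the statement:
\[
P(\Omega_t + sB) = P(\Omega_t) + n(n-1)W_2(\Omega_t)\, s + o(s) \qquad (s\to 0^+).
\]
Putting the two facts together gives, for every $t$ and every small $s>0$,
\[
\frac{P(\Omega_{t-s}) - P(\Omega_t)}{s} \ge \frac{P(\Omega_t + sB) - P(\Omega_t)}{s} = n(n-1)W_2(\Omega_t) + o(1).
\]
Letting $s\to 0^+$, the left-hand side tends (at points where $t\mapsto P(\Omega_t)$ is differentiable) to $-\frac{d}{dt}P(\Omega_t)$, and we conclude
\[
-\frac{d}{dt}P(\Omega_t) \ge n(n-1)W_2(\Omega_t)
\]
for a.e.\ $t\in(0,r_\Omega)$. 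The differentiability a.e.\ is guaranteed by the remark preceding the lemma: $t\mapsto P(\Omega_t)^{1/(n-1)}$ is concave, hence locally Lipschitz and differentiable a.e.\ on $(0,r_\Omega)$, and therefore so is $t\mapsto P(\Omega_t)$ (as $P(\Omega_t)$ stays bounded away from $0$ on compact subintervals).

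For the equality case when $\Omega = B_R$ is a ball, one checks directly that $\Omega_t = B_{R-t}$, so $P(\Omega_t) = n\omega_n (R-t)^{n-1}$ and $-\frac{d}{dt}P(\Omega_t) = n(n-1)\omega_n(R-t)^{n-2}$, while $W_2(B_{R-t}) = \omega_n (R-t)^{n-2}$ from the Steiner formula \eqref{Steiner_formula} (or \eqref{quermass_con_curvature} with $H_1 = (R-t)^{-1}$); the two sides coincide. The one delicate point I expect to need care is the passage from the inequality for difference quotients to the pointwise derivative: a priori the outer-parallel comparison only controls the \emph{lower right Dini derivative} of $t\mapsto P(\Omega_t)$ from the left (equivalently the upper left derivative), so one must invoke the a.e.\ differentiability coming from concavity of $P(\Omega_t)^{1/(n-1)}$ to identify all four Dini derivatives at a.e.\ point and legitimately write $-\frac{d}{dt}P(\Omega_t)$. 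Everything else is a direct consequence of monotonicity of perimeter and the already-quoted Steiner expansion \eqref{derivata_perimetro}.
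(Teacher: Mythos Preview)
Your proof is correct and follows essentially the same route as the paper: the key inclusion $\Omega_t + sB \subset \Omega_{t-s}$, monotonicity of the perimeter under inclusion of convex sets, and the Steiner expansion \eqref{derivata_perimetro} are exactly the ingredients used there. Your additional remarks on Dini derivatives and the explicit computation in the ball case are more detailed than the paper's version but do not change the argument.
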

\begin{proof}
    For every $\rho \in (0,t)$ it holds
    \[
    \Omega_t + \rho B_1 \subset \Omega_{t-\rho}
    \]
    and if $\Omega$ is a ball, the two sets coincide. The monotonicity of the perimeter with respect to the inclusion of convex sets and formula \eqref{derivata_perimetro} give, for almost every $t \in (0,r_{\Omega})$,
    \begin{align*}
        -\frac{d}{dt} P(\Omega_t) & = \lim_{\rho \to 0^+} \frac{P(\Omega_{t-\rho}) - P(\Omega_t)}{\rho} \\
        & \geq \lim_{\rho \to 0^+} \frac{P(\Omega_{t} + \rho B_1) - P(\Omega_t)}{\rho} = n(n-1) W_2(\Omega_t)
    \end{align*}
\end{proof}

Combining the chain rule, the previous lemma and the fact that $\abs{\nabla d(x)} = 1$ almost everywhere, we obtain
\begin{lemma}
    \label{lemma_derivata_perimetro_2}
    Let $f \colon [0,+\infty) \to [0,+\infty)$ be a strictly increasing $C^1$ function with $f(0)=0$. Set $u(x)=f(d(x))$ and
    \[
    E_t = \Set{ x \in \Omega \, : \, u(x) > t } = \Omega_{f^{-1}(t)}
    \]
    then
    \begin{equation}
        \label{eq_lemma_2}
        -\frac{d}{dt} P(E_t) \geq (n-1) \frac{W_2(E_t)}{\abs{\nabla u}_{u=t}}
    \end{equation}
\end{lemma}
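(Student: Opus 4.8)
The plan is to deduce \eqref{eq_lemma_2} from Lemma~\ref{lemma_derivata_perimetro_1} by the obvious one-dimensional change of variable along the distance function. Write $s=f^{-1}(t)$. Since $f\colon[0,+\infty)\to[0,+\infty)$ is $C^1$, strictly increasing, $f(0)=0$ (and $f'>0$, which is needed for the right-hand side of \eqref{eq_lemma_2} to be meaningful), the map $t\mapsto s=f^{-1}(t)$ is a strictly increasing $C^1$ bijection of $(0,f(r_\Omega))$ onto $(0,r_\Omega)$, with $(f^{-1})'(t)=1/f'(s)>0$, and $E_t=\Omega_{f^{-1}(t)}=\Omega_s$. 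Moreover, since $u=f\circ d$ and $\abs{\nabla d(x)}=1$ for a.e.\ $x\in\Omega$, the chain rule gives $\abs{\nabla u(x)}=f'(d(x))$ a.e., so on the level set $\{u=t\}=\{d=s\}$ one has $\abs{\nabla u}_{u=t}=f'(s)$.

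Next, recall that $t\mapsto P(\Omega_t)^{1/(n-1)}$ is concave on $[0,r_\Omega]$, hence $t\mapsto P(\Omega_t)$ is locally absolutely continuous on $(0,r_\Omega)$; composing with the locally bi-Lipschitz change of variable $t\mapsto f^{-1}(t)$ preserves local absolute continuity, so $t\mapsto P(E_t)=P(\Omega_{f^{-1}(t)})$ is locally absolutely continuous and, for a.e.\ $t$,
\[
\frac{d}{dt}P(E_t)=\left.\frac{d}{ds}P(\Omega_s)\right|_{s=f^{-1}(t)}\,(f^{-1})'(t).
\]
Multiplying Lemma~\ref{lemma_derivata_perimetro_1}, namely $-\frac{d}{ds}P(\Omega_s)\geq n(n-1)W_2(\Omega_s)$ for a.e.\ $s$, by the positive factor $(f^{-1})'(t)=1/f'(s)$, and using $W_2(\Omega_s)=W_2(E_t)$ and $f'(s)=\abs{\nabla u}_{u=t}$, yields
\[
-\frac{d}{dt}P(E_t)\geq n(n-1)\,\frac{W_2(E_t)}{\abs{\nabla u}_{u=t}}\geq(n-1)\,\frac{W_2(E_t)}{\abs{\nabla u}_{u=t}},
\]
the last step because $W_2(E_t)\geq 0$ and $n\geq 2$; this is \eqref{eq_lemma_2}, in fact with the better constant $n(n-1)$.

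The step that needs genuine care is the transfer of the ``for a.e.\ $s$'' quantifier in Lemma~\ref{lemma_derivata_perimetro_1} to the ``for a.e.\ $t$'' in \eqref{eq_lemma_2}, together with the validity of the chain rule for a.e.\ $t$. Both are handled by the same observation: on every compact subinterval of $(0,f(r_\Omega))$ the continuous function $f'$ is bounded away from $0$, so $f^{-1}$ is Lipschitz there and maps Lebesgue-null sets to Lebesgue-null sets. Finally, the pointwise identity $\abs{\nabla u}_{u=t}=f'(s)$ along the level sets is to be understood in the coarea sense — valid for a.e.\ $t$ and $\mathcal H^{n-1}$-a.e.\ point of $\{u=t\}$ — exploiting that $\{\abs{\nabla d}\neq1\}$ is negligible. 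Beyond this bookkeeping I expect no real difficulty.
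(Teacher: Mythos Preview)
Your proof is correct and follows exactly the approach the paper indicates---chain rule, Lemma~\ref{lemma_derivata_perimetro_1}, and $\abs{\nabla d}=1$ a.e.---only spelling out the measure-theoretic bookkeeping that the paper leaves implicit. Your observation that the argument actually yields the constant $n(n-1)$ rather than $(n-1)$ is correct; the stated form appears to be a slip (harmless for the later applications, where the same constant appears on both sides of the comparison with the ball).
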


\subsection{Monotonicity of eigenfunctions}
It is well-known (by definition) that for fixed $1 <p < +\infty$ and $\Omega \subset \R^n$, the map 
\[
\beta \in \R \mapsto \lambda_{p,\beta}(\Omega)
\]
is increasing and it holds (see for instance \cite{LE_Eigenvalue})
\[
\lambda_{p,N} < \lambda_ {p,\beta} < \lambda_{p,D} \qquad \forall \beta \in (0,+\infty) 
\]
and
\[
\lim_{\beta \to 0^+} \lambda_{p,\beta} = \lambda_{p,N} = 0 \qquad \lim_{\beta \to + \infty} \lambda_{p,\beta} = \lambda_{p,D}
\]
where $\lambda_{p,N}$ and $\lambda_{p,D}$ are the first eigenvalue of the Neumann $p$-Laplacian and Dirichlet $p$-Laplacian respectively.

Now we want to observe that, if we consider a ball $B_R$ and we fix the value of the eigenfunction in the origin, the map 
\[
\beta\to v_\beta, \quad v_\beta(0)=C
\]
is decreasing, in the sense that
\[
\text{if } \beta_1 < \beta_2 \Rightarrow v_{p,\beta_1}(x) \geq  v_{p,\beta_2}(x) \qquad \forall x \in B_R
\]
First of all, let us recall that the first eigenvalue of $p$-laplacian is simple and that the corresponding eigenfunction is radially symmetric, i.e. there exists $h \colon [0,R] \to \R$ such that $u(x)=h(\abs{x})$. In the following, we will denote with $v_{p,\beta}$ both the eigenfunction and the function $h$. 

\begin{lemma}
	\label{Lemma_taglio_autofunzioni}
	Let $0<r<R$, $1<p<+\infty$, $\beta \in \R$. Let us denote by $\lambda_{p,\beta}$ the first eigenvalue of the $p$-Laplacian on the ball $B_R$ defined in \eqref{rel} with boundary parameter $\beta$ and let $v_{p,\beta}$ be the corresponding eigenfunction, then $v_{p,\beta} \rvert_{B_r}$ is the first eigenfunction of the $p$-Laplacian on the ball $B_r$ with boundary  parameter \[
	\displaystyle{ \gamma =- \frac{  \lvert v'_{p,\beta } \rvert^{p-2}(r) v'_{p,\beta}(r)}{v_{p,\beta}^{p-1}(r)} }.
	\]

	\begin{proof}
		Let us suppose $p=2$, the general case is analogous. For sake of simplicity, we denote by $\lambda_{\beta}:=\lambda_{2,\beta}$ and $v_{\beta}:=v_{2,\beta}$.
			
		By the radiality of $v_\beta$, we can infer that it is a solution to
		\[
		    \begin{cases}
		        -\Delta v_{\beta} = \lambda_{\beta} v_{\beta} & \text{in }B_r \\
		        \displaystyle{ \frac{\partial v_\beta{}}{\partial \nu} + \gamma v_{\beta} = 0 } & \text{on } \partial B_r.
		    \end{cases}
		\]
		Let us suppose by contradiction that $\lambda_{\beta}$ is not the first eigenvalue of the Robin Laplacian with boundary parameter $\gamma$. So we can choose
		\begin{equation}
		    \label{Hp_assurdo}
		    \lambda_{\gamma} < \lambda_{\beta}    
		\end{equation}
		and $w_{\gamma}$ respectively first eigenvalue and the first eigenfunction of the Robin Laplacian with boundary parameter $\gamma$, that is
		\begin{equation}
		    \label{lgam}
		     \lambda_{\gamma} = \frac{\displaystyle{ \int_{B_r} \abs{\nabla w_{\gamma}}^2 \, dx + \gamma \int_{\partial B_r} w_{\gamma}^2 \, d \mathcal{H}^{n-1} }}{\displaystyle{ \int_{B_r} w_{\gamma}^2 \, dx }}
			=
			\min_{\psi \in W^{1,2}(B_r)} \frac{\displaystyle{ \int_{B_r} \abs{\nabla \psi}^2 \, dx + \gamma \int_{\partial B_r} \psi^2 \, d \mathcal{H}^{n-1} }}{\displaystyle{ \int_{B_r} \psi^2 \, dx }}
		\end{equation}
		   
		We know that $w_{\gamma}$ is unique up to a multiplicative constant, so we can choose the constant such that $v_\beta = w_\gamma$ on $\partial B_r$.
		
		Let us consider the function
		\[
		f = \begin{cases}
		    w_\gamma(x) & \text{if }  x\in B_{r} \\
		    v_\beta(x) & \text{if } x\in B_{R} \meno B_r,
		\end{cases}
		\]
		we can use it as test function in the definition of $\lambda_{\beta}$. In particular
		\begin{align*}
		    \lambda_{\beta} & \leq \frac{ \displaystyle{ \int_{B_R} \abs{\nabla f}^2 + \beta \int_{\partial B_r} f^2 \, d\mathcal{H}^{n-1} } }{ \displaystyle{ \int_{B_R} f^2 \, dx} } \\
		    & = \frac{ \displaystyle{ \int_{B_r} \abs{\nabla w_\gamma}^2 + \int_{B_R\meno B_r} \abs{\nabla v_\beta}^2  + \beta \int_{\partial B_R} v_\beta^2 \, d\mathcal{H}^{n-1} } }
		    { \displaystyle{ \int_{B_r} w_\gamma^2 \, dx + \int_{B_R \meno B_r} v_{\beta}^2 \, dx } } .
		 \end{align*}
		 If we add and subtract $\displaystyle{\gamma\int_{\partial B_r} w_\gamma \, d\mathcal{H}^{n-1}}$ and $\displaystyle{\int_{B_r}\abs{\nabla u}^2}$, we recall that $v_\beta$ and $w_\gamma$ are eigenfunction, so \eqref{rel} hold, and we recall that $v_\beta$ and $w_\gamma$ coincides on $\partial B_r$, we have
		 \begin{align*}
		    \lambda_\beta & \leq \frac{\displaystyle{  \lambda_\gamma \int_{B_r} w_\gamma^2 \, dx + \lambda_{\beta} \int_{B_R} v_\beta^2 \, dx - \lambda_{\beta} \int_{B_r} v_\beta^2 \, dx  }}
		    {\displaystyle{ \int_{B_r} w_\gamma^2 \, dx + \int_{B_R \meno B_r} v_\beta^2 \, dx }} \\
		    & = \frac{\displaystyle{  \lambda_\gamma \int_{B_r} w_\gamma^2 \, dx + \lambda_{\beta} \int_{B_R \meno B_r} v_\beta^2 \, dx }}
		    {\displaystyle{ \int_{B_r} w_\gamma^2 \, dx + \int_{B_R \meno B_r} v_\beta^2 \, dx }}\\
		    &< \lambda_{\beta} \frac{ {\displaystyle{ \int_{B_r} w^2 \, dx + \int_{B_R \meno B_r} w^2 \, dx }} }{ {\displaystyle{ \int_{B_r} w^2 \, dx + \int_{B_R \meno B_r} w^2 \, dx }} } = \lambda_{\beta},
		\end{align*}
		where in the last formula we use \eqref{Hp_assurdo}, and this is an absurd. \qedhere
	\end{proof}
\end{lemma}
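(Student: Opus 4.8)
The plan is to verify that the restriction $v\rvert_{B_r}$, where $v:=v_{p,\beta}$, is an eigenfunction of the Robin $p$-Laplacian on $B_r$ with parameter $\gamma$, and then to show that the associated eigenvalue $\lambda_{p,\beta}$ is the \emph{first} one. First I would record the structural facts: $v$ may be taken positive and, by the radial symmetry recalled above, $v=h(\abs{x})$ with $h\in C^1([0,R])$ and $h>0$ on $[0,R]$ — this uses the interior $C^{1,\alpha}$ regularity for $p$-Laplacian type equations together with the Harnack inequality / strong maximum principle, and in particular $v(r)>0$, so that $\gamma$ is well defined. Since $v$ solves $-\Delta_p v=\lambda_{p,\beta}\abs{v}^{p-2}v$ in all of $B_R$, the same equation holds in $B_r$; moreover, for a radial function the outer normal derivative on $\partial B_r$ equals $v'(r)$ and $\abs{\nabla v}=\abs{v'(r)}$ there, so the definition of $\gamma$ says exactly that $\abs{v'}^{p-2}(r)v'(r)+\gamma\, v^{p-1}(r)=0$, i.e. $v\rvert_{B_r}$ satisfies the Robin condition with parameter $\gamma$ on $\partial B_r$. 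Integrating by parts, this shows that $v\rvert_{B_r}$ is a weak solution on $B_r$ with parameter $\gamma$, so $(\lambda_{p,\beta},v\rvert_{B_r})$ is an eigenpair and only the minimality $\lambda_{p,\beta}=\lambda_{p,\gamma}(B_r)$ remains to be proved.

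For the minimality I would argue by contradiction, assuming $\lambda_{p,\gamma}(B_r)<\lambda_{p,\beta}$. Let $w$ be a first eigenfunction of the Robin $p$-Laplacian on $B_r$ with parameter $\gamma$; since it has constant sign one may take $w>0$ on $\overline{B_r}$ (again by Harnack, together with the fact that $w(r)=0$ would force, through the boundary condition, $w'(r)=0$ and hence $w\equiv 0$), and then rescale $w$ so that $w=v$ on $\partial B_r$. The glued function $f$ equal to $w$ on $B_r$ and to $v$ on $B_R\meno B_r$ is continuous across $\partial B_r$, hence $f\in W^{1,p}(B_R)$, and I would use it as a competitor in the Rayleigh quotient \eqref{rel} for $\lambda_{p,\beta}(B_R)$. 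Note that no matching of normal derivatives at $\partial B_r$ is required: the jump of $\partial f/\partial\nu$ there is precisely what will produce a strict, rather than an equality, estimate.

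The computation then amounts to testing weak formulations against the eigenfunctions themselves. Testing the equation of $w$ on $B_r$ by $w$ yields $\int_{B_r}\abs{\nabla w}^p\,dx=\lambda_{p,\gamma}\int_{B_r}w^p\,dx-\gamma\int_{\partial B_r}w^p\,d\mathcal{H}^{n-1}$; using that $v\rvert_{B_r}$ solves the problem with parameter $\gamma$ gives $\int_{B_r}\abs{\nabla v}^p\,dx=\lambda_{p,\beta}\int_{B_r}v^p\,dx-\gamma\int_{\partial B_r}v^p\,d\mathcal{H}^{n-1}$; and testing the equation of $v$ on $B_R$ by $v$ gives $\int_{B_R}\abs{\nabla v}^p\,dx+\beta\int_{\partial B_R}v^p\,d\mathcal{H}^{n-1}=\lambda_{p,\beta}\int_{B_R}v^p\,dx$. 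Substituting these into the numerator $\int_{B_R}\abs{\nabla f}^p\,dx+\beta\int_{\partial B_R}f^p\,d\mathcal{H}^{n-1}$, the two $\gamma$-boundary terms cancel because $w=v$ on $\partial B_r$, and the numerator collapses to $\lambda_{p,\gamma}\int_{B_r}w^p\,dx+\lambda_{p,\beta}\int_{B_R\meno B_r}v^p\,dx$, while the denominator is $\int_{B_r}w^p\,dx+\int_{B_R\meno B_r}v^p\,dx$. Since $\lambda_{p,\gamma}<\lambda_{p,\beta}$, this quotient is strictly smaller than $\lambda_{p,\beta}$, contradicting the variational characterization \eqref{rel}; hence $\lambda_{p,\gamma}(B_r)=\lambda_{p,\beta}$ and $v\rvert_{B_r}$ is a first eigenfunction. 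Alternatively, once we know $v\rvert_{B_r}$ is a positive eigenfunction, one could simply invoke the fact that a constant-sign eigenfunction is necessarily the first one, but the contradiction route keeps everything self-contained.

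I expect the main, and essentially the only, obstacle to be the boundary regularity and positivity of $v$ and $w$ up to $\partial B_r$ — that is, $v,w\in C^1$ near $\partial B_r$ with $v(r),w(r)>0$ — which is what makes $\gamma$ well defined, the pointwise Robin condition on $\partial B_r$ meaningful, and the rescaling $w=v$ on $\partial B_r$ legitimate; for the $p$-Laplacian this follows from the $C^{1,\alpha}$ theory and Harnack applied to the radial reduced ODE. The remaining point, announced by the phrase ``the general case is analogous'', is purely bookkeeping: wherever the $p=2$ computation uses $u$, $\abs{\nabla u}^2$, $u^2$ one writes $\abs{u}^{p-2}u$, $\abs{\nabla u}^p$, $\abs{u}^p$, and both the gluing and the three integration-by-parts identities are insensitive to the value of $p$.
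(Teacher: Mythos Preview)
Your proof is correct and follows essentially the same route as the paper: verify that $v\rvert_{B_r}$ solves the Robin problem on $B_r$ with parameter $\gamma$, then argue by contradiction by gluing the first $\gamma$-eigenfunction on $B_r$ (normalized to match $v$ on $\partial B_r$) with $v$ on the annulus and use the glued function as a competitor in the Rayleigh quotient for $\lambda_{p,\beta}(B_R)$. Your write-up is in fact more explicit than the paper's about the regularity/positivity needed to make $\gamma$ well defined and about the three integration-by-parts identities that collapse the numerator, and it carries out the computation directly for general $p$ rather than only for $p=2$.
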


\begin{prop}
	\label{monotonia_autofunzioni}
	Let $R>0$, $1<p<+\infty$ and $\beta_1 < \beta_2$. Let us denote by $\lambda_{p,\beta_1}$ and $\lambda_{p,\beta_2}$ the eigenvalues defined in \eqref{rel} and let $v_{p,\beta_1}$ and $v_{p,\beta_2}$ be the corresponding eigenfunctions normalized such that $v_{p,\beta_1}(0) = v_{p,\beta_2}(0) > 0$, then
	\begin{equation}
	    \label{eq_monotonia_autofunzioni}
		v_{p,\beta_1} (x) \geq v_{p,\beta_2} (x) \qquad \forall x \in B_R.
	\end{equation}
\end{prop}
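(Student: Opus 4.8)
The plan is to read the slicing Lemma~\ref{Lemma_taglio_autofunzioni} as a comparison of the two radial profiles \emph{at every radius}, so that \eqref{eq_monotonia_autofunzioni} drops out almost for free. Throughout I write $\lambda_i:=\lambda_{p,\beta_i}(B_R)$ and I use $v_{p,\beta_i}$ also for the one–variable radial profile, which is a positive $C^1$ function on $[0,R]$ with $v_{p,\beta_1}(0)=v_{p,\beta_2}(0)=:c>0$. Since $\beta\mapsto\lambda_{p,\beta}(B_R)$ is strictly increasing (immediate from the Rayleigh quotient \eqref{rel}, the first Robin eigenfunction having nonzero trace on $\partial B_R$), from $\beta_1<\beta_2$ I obtain $\lambda_1<\lambda_2$ — the only quantitative ingredient besides the lemma.

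For each $r\in(0,R)$ I would introduce
\[
\gamma_i(r):=-\,\frac{\abs{v_{p,\beta_i}'}^{p-2}(r)\,v_{p,\beta_i}'(r)}{v_{p,\beta_i}(r)^{p-1}},
\]
which is precisely the parameter produced by Lemma~\ref{Lemma_taglio_autofunzioni}: the restriction of $v_{p,\beta_i}$ to $B_r$ is the \emph{first} eigenfunction of the Robin $p$-Laplacian on $B_r$ with boundary parameter $\gamma_i(r)$, and since it still solves the equation with the constant $\lambda_i$, this says $\lambda_{p,\gamma_i(r)}(B_r)=\lambda_i$. I then invoke monotonicity in the boundary parameter: $\gamma\mapsto\lambda_{p,\gamma}(B_r)$ is non-decreasing, so $\lambda_{p,\gamma_1(r)}(B_r)=\lambda_1<\lambda_2=\lambda_{p,\gamma_2(r)}(B_r)$ excludes $\gamma_1(r)>\gamma_2(r)$; hence $\gamma_1(r)\le\gamma_2(r)$ for every $r\in(0,R)$.

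Finally I would unwind this inequality. Because $v_{p,\beta_i}>0$, one has $\gamma_i(r)=-\,\abs{v_{p,\beta_i}'/v_{p,\beta_i}}^{p-2}\,(v_{p,\beta_i}'/v_{p,\beta_i})(r)$, and since $t\mapsto\abs{t}^{p-2}t$ is a strictly increasing bijection of $\mathbb{R}$, the inequality $\gamma_1(r)\le\gamma_2(r)$ is equivalent to
\[
\bigl(\log v_{p,\beta_1}-\log v_{p,\beta_2}\bigr)'(r)=\frac{v_{p,\beta_1}'(r)}{v_{p,\beta_1}(r)}-\frac{v_{p,\beta_2}'(r)}{v_{p,\beta_2}(r)}\ \ge\ 0 .
\]
Thus $\log v_{p,\beta_1}-\log v_{p,\beta_2}$ is continuous on $[0,R]$, non-decreasing on $(0,R)$, and vanishes at the origin, whence it is $\ge 0$ on $[0,R]$; this is exactly $v_{p,\beta_1}\ge v_{p,\beta_2}$ on $B_R$, i.e.\ \eqref{eq_monotonia_autofunzioni}.

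I do not expect a serious obstacle here: the argument is essentially a corollary of Lemma~\ref{Lemma_taglio_autofunzioni} applied on each inner ball $B_r$, $0<r<R$. The only points to treat with some care are the admissibility of the $\gamma_i(r)$ — that is, the $C^1$-regularity and strict positivity of the radial profiles on $[0,R]$, the latter being part of the hypotheses — and the \emph{strict} monotonicity $\lambda_1<\lambda_2$. Should one prefer to avoid the pointwise statement on all of $(0,R)$, one can instead run a contradiction argument at the first radius where the two profiles meet; this costs a short expansion of $v_{p,\beta_i}$ near the origin to guarantee that this first contact radius is interior, and at that radius the comparison of the parameters $\gamma_i$ is carried out exactly as above.
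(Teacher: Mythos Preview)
Your proof is correct and rests on the same two ingredients as the paper's --- Lemma~\ref{Lemma_taglio_autofunzioni} applied on inner balls and the monotonicity of $\gamma\mapsto\lambda_{p,\gamma}$ --- but organizes them more directly. The paper first secures the strict inequality $v_{p,\beta_1}>v_{p,\beta_2}$ on a small interval near the origin by comparing the radial ODEs (via the integral representation $v'_{\beta_i}(r)=-r^{1-n}\int_0^r s^{n-1}\lambda_{\beta_i}v_{\beta_i}\,ds$), and then argues by contradiction at the first radius $t$ where the two profiles coincide, invoking the slicing lemma only at that single point to produce $\gamma_1>\gamma_2$ and hence $\lambda_{\beta_1}>\lambda_{\beta_2}$. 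You instead apply the slicing lemma at \emph{every} $r\in(0,R)$, obtain $\gamma_1(r)\le\gamma_2(r)$ throughout, unwrap this as $(\log v_{p,\beta_1}-\log v_{p,\beta_2})'\ge0$, and integrate from the origin. Your route is cleaner in that it avoids the local ODE analysis near $0$ and the contradiction step; the paper's route has the small advantage of using the slicing lemma at a single radius rather than continuously. The alternative you sketch in your final paragraph --- running a contradiction at the first contact radius after a short expansion near the origin --- is precisely the paper's argument.
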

\begin{proof}
	Let us suppose $p=2$, the general case is analogous. For sake of simplicity, we denote by $\lambda_{\beta_i}:=\lambda_{2,\beta_i}$ and $v_{\beta_i}:=v_{2,\beta_i}$.
	
	Since both $v_{\beta_1}$ and $v_{\beta_2}$ are radial, we can write the laplacian in polar coordinates, that is
	\begin{equation}
		r^{n-1}\Delta u (r) = \bigl( r^{n-1} u'(r) \bigr) \qquad r \in [0,R].
	\end{equation}
		
	Therefore function $u_{\beta_i}$, for $i=1,2$, satisfies
	\begin{equation}
		\label{ODE_u_beta}
		v_{\beta_i}'(r) = - \frac{1}{r^{n-1}} \biggl[ \int_0^r s^{n-1} \lambda_{\beta_i} v_{\beta_i} \, ds \biggr].
	\end{equation}
		
	Since $\lambda_{\beta_1}<\lambda_{\beta_2}$ and $v_{\beta_1}(0) = v_{\beta_2}(0)$, by continuity there exists $\delta > 0$ such that
	\[
		\lambda_{\beta_1} v_{\beta_1}(s) < \lambda_{\beta_2} v_{\beta_2}(s) \qquad \forall s \in (0,\delta),
	\]
	and by \eqref{ODE_u_beta}
	\[
		v_{\beta_1}'(s) > v_{\beta_2}'(s) \qquad s \in (0,\delta).
	\]
	By classical ODE comparison result, we obtain
	\begin{equation}
		\label{confronto_u_beta}
	    	v_{\beta_1}(s) > v_{\beta_2}(s) \qquad s \in (0,\delta).
	\end{equation}
	Let us define
	\[
	A = \{ r > \delta\, : \, v_{\beta_1}(s) = v_{\beta_2}(s) \},
	\]
	we want to prove that $A$ is empty, so by \eqref{confronto_u_beta} we get the claim. Let us suppose by contradiction that $A \neq \varnothing$, hence there exists
	\[
	t = \inf A.
	\]
	By continuity $v_{\beta_1}(t) = v_{\beta_2}(t)$, and this, combined with \eqref{confronto_u_beta}, leads to
	\begin{equation}
		\label{confronto_u'_in_t}
		v_{\beta_1}'(t) < v_{\beta_2}'(t).
	\end{equation}

	Let us set
	\[
	\gamma_i = -\frac{v_{\beta_i}'(t)}{v_{\beta_i}(t)},
	\]
	by \eqref{confronto_u'_in_t} we have $\gamma_1 > \gamma_2$.
		
	\noindent By Lemma \ref{Lemma_taglio_autofunzioni}, $v_{\beta_1}$ and $v_{\beta_2}$ are the first eigenfunction of the Robin Laplacian in $B_t$ respectively with eigenvalue $\lambda_{\gamma_1}$ and $\lambda_{\gamma_2}$ with parameter $\gamma_1$ and $\gamma_2$. Therefore by monotonicity of eigenvalues with respect to the boundary parameter, we get
	\[
	\lambda_{\beta_1} = \lambda_{\gamma_1} > \lambda_{\gamma_2} = \lambda_{\beta_2}
	\]
	that is an absurd.
\end{proof}

\begin{oss}
    \label{monotonia_norme_autofunzioni}
    We highlight that, if we fix the value  $v_{p,\beta}(0)$, \eqref{eq_monotonia_autofunzioni} implies that 
    \[
    \beta \mapsto \norma{v_{p,\beta}}_{L^p}
    \]
    is non increasing, while the map
    \[
    \beta \mapsto C(n,p,\beta,\rho)= \frac{ v_{p,\beta}(0)\abs{ \Omega^{\star}} }{\norma{v_{p,\beta}}_p^p}
    \]
    is non decreasing, for all $\beta\in \R$.
\end{oss}

\section{Proofs of main results}\label{sec3}

\begin{proof}[Proof of Theorem \ref{teorema_1}]
    The quantity 
    \[
    \frac{\lambda_{p,\beta}(\Omega)-\lambda_{p,\beta}(\Omega^\star)}{\lambda_{p,\beta}(\Omega)},
    \]
    is bounded from above by 1, so inequality \eqref{beta_pos} is trivial when
    \begin{equation*}
    \label{remark}
        \left(1- \frac{n^{\frac{n}{n-1}}\omega_n^{\frac{1}{n-1}}\abs{\Omega}}{P(\Omega)^{\frac{n}{n-1}}}\right)\ge \frac{1}{C(n,p,\beta,\rho)} = \frac{ \norma{v}_{\infty}^p \abs{\Omega^{\star}} }{ \norma{v}_p^p }.
    \end{equation*}
    We can assume
    \begin{equation}
        \label{assump}
        \left(1- \frac{n^{\frac{n}{n-1}}\omega_n^{\frac{1}{n-1}}\abs{\Omega}}{P(\Omega)^{\frac{n}{n-1}}}\right)< \frac{1}{C(n,p,\beta,\rho)}.
    \end{equation}
    Let $v$ be the solution to
    \begin{equation}
        \label{palla}
        \begin{cases}
            -\Delta_p v= \lambda_{p,\beta}(\Omega^\star) \abs{v}^{p-2} v \, & \text{in} \, \, \Omega^\star\\
            \displaystyle{\abs{{\nabla v}}^{p-2}\frac{\partial v}{\partial \nu}+ \beta \abs{v}^{p-2} v=0} \, & \text{on} \, \, \partial\Omega^\star,
        \end{cases}
    \end{equation}
    it is well known that $v$ is positive and radially symmetric.
    So the function
    \[
    g(t)= \abs{\nabla v}_{v=t}
    \]
    is well defined for all $t\in (v_m, v_M)$, where $v_m=\min_{\Omega^\star} v$ and $v_M= \max_{\Omega^\star} v$.

    Let us define $u(x)=G(d(x))$, $x\in \Omega$, where

    \[
    G^{-1}(t)=\int_{v_m}^t \frac{1}{g(s)}\, ds. \qquad v_m < t <v_M.
    \]
    By construction, $u\in W^{1,2}(\Omega)$ and
    \begin{gather*}
        \min_{\Omega} u= G(0)=v_m, \\
        \norma{u}_{\infty}\le v_M, \\[1ex]
        \abs{\nabla u}_{u=t}= \abs{G'(d(x))}_{G=t}=g(t)= \abs{\nabla v}_{v=t}.
    \end{gather*}
    Let
    \[
    E_t=\{x\in \Omega: \, u(x)>t\}, \, \, \, B_t=\{x\in\Omega^\star: \, v(x)>t\}.
    \]
    By Lemma \ref{lemma_derivata_perimetro_2} and formula \eqref{Aleksandrov_Fenchel_W_2} we have
    \[
    -\frac{d}{dt}P(E_t)\ge (n-1) \frac{W_2(E_t)}{g(t)}\ge (n-1)n^{-\frac{n-2}{n-1}}\omega_n^{\frac{1}{n-1}}\frac{(P(E_t))^{\frac{n-2}{n-1}}}{g(t)}
    \]
    while for $v$ it holds
    \[
    -\frac{d}{dt}P(B_t)= (n-1) \frac{W_2(B_t)}{g(t)}= (n-1)n^{-\frac{n-2}{n-1}}\omega_n^{\frac{1}{n-1}}\frac{(P(B_t))^{\frac{n-2}{n-1}}}{g(t)}
    \]
    and $P(E_0)=P(B_0)$. Then, by classical comparison theorems for differential inequalities, 
    \begin{equation}
        \label{compper}
        P(E_t)\leq P(B_t), \quad 0\leq t \leq \norma{u}_\infty.
    \end{equation}
    Denoting by $\mu(t)=\abs{E_t}$ and by $\nu(t)=\abs{B_t}$, the coarea formula \eqref{coarea} ensures us that
    \begin{align*}
        -\mu'(t)=&\int_{u=t}\frac{1}{\abs{\nabla u}} \, d \mathcal{H}^{n-1}= \frac{P(E_t)}{g(t)}\le \frac{P(B_t)}{g(t)}\\
        =&\int_{v=t}\frac{1}{\abs{\nabla v}} \, d \mathcal{H}^{n-1}=-\nu'(t), 
    \end{align*}
    for $t\in (0,\norma{u}_\infty)$. The first equality holds true since $\abs{\nabla u} \neq 0$ in $\Set{v_m<u<\norma{u}_\infty}$ (see \cite{Brothers1988}). So the function $\nu-\mu$ is decreasing in $[0, \norma{u}_\infty]$, and
    \begin{align*}
        \int_\Omega u^p \, dx=&\int_0^{\norma{u}_\infty}p t^{p-1} \mu(t)\, dt=\int_0^{v_M}p t^{p-1} \nu(t)\, dt- \int_0^{v_M}p t^{p-1} (\nu(t)-\mu(t))\, dt \\
        \ge&\int_{\Omega^\star}v^p\, dx- v_M^p(\abs{\Omega^\star}-\abs{\Omega}).
    \end{align*}
    Moreover, by \eqref{compper}, we get
    \begin{equation*}
        \int_{u=t} \abs{\nabla u}^{p-1} \, d\mathcal{H}^{n-1}=g(t)^{p-1}P(E_t)\le g(t)^{p-1}P(B_t)=\int_{v=t} \abs{\nabla v}^{p-1}\, d\mathcal{H}^{n-1},
    \end{equation*}
    so, if we integrate from $0$ to $\norma{u}_\infty$,
    \begin{equation*}
        \int_\Omega \abs{\nabla u }^p \, dx \le \int_0^{\norma{u}_\infty} \int_{v=t} \abs{\nabla v}^{p-1}\, d\mathcal{H}^{n-1} \, dt\le \int_{\Omega^\star} \abs{\nabla v}^p.
    \end{equation*}
    We also observe that by construction both $u$ and $v$ are constant on $\partial \Omega$, so
    \begin{equation*}
        \beta \int_{\partial\Omega} u^p \, d\mathcal{H}^{n-1}= \beta v_m^pP(\Omega)=\beta\int_{\partial\Omega^\star} v^p \, d\mathcal{H}^{n-1}.
    \end{equation*}
    We finally get
    \begin{equation}
    \begin{aligned}
        \lambda_{p,\beta}(\Omega)&\le \frac{\displaystyle{\int_\Omega \abs{\nabla u }^p \, dx +\beta \int_{\partial\Omega} u^p \, d\mathcal{H}^{n-1}}}{\displaystyle{\int_\Omega u^p\, dx}} \leq 
        \frac{\displaystyle{\int_{\Omega^\star} \abs{\nabla v}^p\, dx+\beta\int_{\partial\Omega^\star} v^p \, d\mathcal{H}^{n-1}}}{\displaystyle{\int_{\Omega^\star}v^p\, dx- v_M^p(\abs{\Omega^\star}-\abs{\Omega})}} \\
        &=\lambda_{p,\beta}(\Omega^\star) \, \frac{1}{\displaystyle{1- C(n,p,\beta,\rho)\left(1-\frac{\abs{\Omega}}{\abs{\Omega^\star}}\right)}}.
    \end{aligned}
    \end{equation}
    The claim follows from \eqref{assump} as the quantity 
    \[
    1- C(n,p,\beta,\rho)\left(1-\frac{\abs{\Omega}}{\abs{\Omega^\star}}\right)
    \]
    is non-negative.
\end{proof}
\begin{oss}
    \label{rem1}
    The constant $\displaystyle{C(n,p,\beta,\rho)=\frac{\norma{v}_\infty^p\abs{\Omega^\star}}{\norma{v}_p^p}}$ depends on the perimeter of the set $\Omega$ and on $\beta$. Thanks to Proposition \ref{monotonia_autofunzioni} and Remark \ref{monotonia_norme_autofunzioni} it is possible to bound the constant $C(n,p,\beta,\rho)$ from above with a constant independent of the perimeter and of $\beta$. Indeed $\forall \beta > 0$, if we denote by $v_{p,\infty}$ the first Dirichlet eigenfunction normalized in such a way $v_{p,\beta}(0)=v_{p,\infty}(0)$, we have
    \[
    C(n,p,\beta,\rho) \leq \frac{v_{p,\infty}(0)\abs{\Omega^{\star}}}{\norma{v_{p,\infty}}_{L^p}^p} =: C(n,p)
    \]
    that is independent of the perimeter thanks to the rescaling properties of the Dirichlet $p$-Laplacian eigenfunction. 
\end{oss}

\begin{proof}[Proof of Theorem \ref{teo_beta_negative}]
    Let $v$ be a positive eigenfunction associated to $\lambda_{p,\beta}(\Omega^\star)$, then $v$ is a $p$ sub-harmonic function. We denote by $v_m=v(0)= \min_{\Omega^\star} v$ and by  $v_M= \max_{\Omega^\star} v$.

    Let us consider the function $\Tilde{v}= v_M - v$, that is a positive function with zero trace, and  $g(t)= \abs{\nabla v}_{v=t}$, $v_m <t<v_M$. 
    We set $\tilde{u}(x)= G(d(x)), \, x \in \Omega$, where $\displaystyle{G^{-1}(t) =\int_0^{t} \frac{1}{g(s)} \, ds}$ with $0<t<v_M-v_m$. By construction, $\Tilde{u} \in W^{1,p}_0(\Omega)$. 
    Now we can set $u=v_M-\Tilde{u}$, and we have:
    \begin{equation}
        \begin{gathered}
        u_M= \max_\Omega u= v_M\\
        u_m=\min_{\Omega} u = v_M- \max_{\Omega} \Tilde{u} \geq v_M- \max_{\Omega^\star} \Tilde{v}=\min_{\Omega^\star} v = v_m\\
        \abs{\nabla u}_{u=t}= \abs{\nabla v}_{v=t}=g(t) \quad u_m<t<u_M.
        \end{gathered}
    \end{equation}

    Let
    \begin{equation}
        \begin{aligned}
        \widetilde{E}_t &= \{ x \in \Omega \,:\, \tilde{u} (x)>t\}, & \widetilde{B}_t &= \{ x \in \Omega \,:\, \tilde{v} (x)>t\} ,\\
        E_t&= \{ x \in \Omega \,:\, u(x)>t\}= \Omega \setminus \overline{\widetilde{E}}_{v_M - t}, & B_t &= \{ x \in \Omega \,:\, v(x)>t\}=\Omega \setminus \overline{\widetilde{B}}_{v_M - t}.
        \end{aligned}
    \end{equation}
    By Lemma \ref{lemma_derivata_perimetro_2} and formula \eqref{Aleksandrov_Fenchel_W_2} we have
    \[
    -\frac{d}{dt}P(\widetilde{E}_t)\ge (n-1) \frac{W_2(\widetilde{E}_t)}{g(t)}\ge (n-1)n^{-\frac{n-2}{n-1}}\omega_n^{\frac{1}{n-1}}\frac{(P(\widetilde{E}_t))^{\frac{n-2}{n-1}}}{g(t)}
    \]
    while for $v$ it holds
    \[
    -\frac{d}{dt}P(\widetilde{B}_t)= (n-1) \frac{W_2(\widetilde{B}_t)}{g(t)}= (n-1)n^{-\frac{n-2}{n-1}}\omega_n^{\frac{1}{n-1}}\frac{(P(\widetilde{B}_t))^{\frac{n-2}{n-1}}}{g(t)}
    \]
    and $P(E_0)=P(B_0)$. Then, by classical comparison theorems for differential inequalities,
    \begin{equation}
        P(\widetilde{E}_t)\le P(\widetilde{B}_t), \quad 0\le t \le v_M-v_m.
    \end{equation}
    Hence
    \begin{equation}
        \label{compare_tilde}
        P(E_t)=P(\widetilde{E}_{v_M-t})\le P(\widetilde{B}_{v_M-t})=P(B_t), \quad v_m\le t \le v_M.
    \end{equation}
    Moreover, denoted by $\tilde{\mu}(t) = \lvert \widetilde{E}_t \rvert$ and  $\tilde{\nu}(t) = \lvert \widetilde{B}_t \rvert$, the coarea formula \eqref{coarea} ensures us that
    \begin{align*}
        -\tilde{\mu}'(t)=&\int_{\tilde{u}=t}\frac{1}{\abs{\nabla \tilde{u}}} \, d \mathcal{H}^{n-1}= \frac{P(\widetilde{E}_t)}{g(t)}\le \frac{P(\widetilde{B}_t)}{g(t)}\\
        =&\int_{\tilde{v}=t}\frac{1}{\abs{\nabla \tilde{v}}} \, d \mathcal{H}^{n-1}=-\tilde{\nu}'(t),  \qquad 0 \leq t < v_M-v_m.
    \end{align*}
    Moreover, setting $\mu(t)=\abs{E_t}= \abs{\Omega}- \tilde{\mu}(v_M-t)$ and by $\nu(t)=\abs{B_t}=\abs{\Omega^\star}- \tilde{\nu}(v_M-t)$,
    we have $-\mu'(t)\leq -\nu'(t)$ in $[v_m, v_M]$. So the function $\nu-\mu$ is decreasing in $[v_m, v_M]$, and
    \begin{align*}
        \int_\Omega u^p \, dx=&\int_0^{u_M}p t^{p-1} \mu(t)\, dt=\int_0^{v_M}p t^{p-1} \nu(t)\, dt- \int_0^{v_M}p t^{p-1} (\nu(t)-\mu(t))\, dt \\
        =&\int_0^{v_M}p t^{p-1} \nu(t)\, dt- \int_0^{v_m}p t^{p-1} (\nu(t)-\mu(t))\, dt-\int_{v_m}^{v_M}p t^{p-1} (\nu(t)-\mu(t))\, dt \\ \le&\int_{\Omega^\star}v^p\, dx- v_m^p(\abs{\Omega^\star}-\abs{\Omega})=\int_{\Omega^\star}v^p\, dx \left[1- \frac{v_m^p}{\norma{v}_p^p}(\abs{\Omega^\star}-\abs{\Omega})\right].
    \end{align*}
    Moreover, by \eqref{compare_tilde}, we get
    \begin{equation*}
        \int_{\tilde{u}=t} \abs{\nabla \tilde{u}}^{p-1} \, d\mathcal{H}^{n-1}=g(t)^{p-1}P(\widetilde{E}_t)\le g(t)^{p-1}P(\widetilde{B}_t)=\int_{\tilde{v}=t} \abs{\nabla \tilde{v}}^{p-1}\, d\mathcal{H}^{n-1},
    \end{equation*}
    so, if we integrate from $0$ to $\norma{u}_\infty$,
    \begin{align*}
        \int_\Omega \abs{\nabla u }^p \, dx=\int_\Omega \abs{\nabla \tilde{u} }^p \, dx =& \int_0^{\norma{\tilde{u}}_\infty} \int_{\tilde{u}=t} \abs{\nabla \tilde{u}}^{p-1}\, d\mathcal{H}^{n-1} \, dt \\
        \leq & \int_0^{\norma{\tilde{u}}_\infty} \int_{\tilde{v}=t} \abs{\nabla \tilde{v}}^{p-1}\, d\mathcal{H}^{n-1} \, dt= \int_{\Omega^\star} \abs{\nabla \tilde{v}}^p=\int_{\Omega^\star} \abs{\nabla v}^p.
    \end{align*}
    We also observe that by construction both $u$ and $v$ are constant on $\partial \Omega$, so
    \begin{equation*}
        \beta \int_{\partial\Omega} u^p \, d\mathcal{H}^{n-1}= \beta u_M^pP(\Omega)=\beta v_M^pP(\Omega)=\beta\int_{\partial\Omega^\star} v^p \, d\mathcal{H}^{n-1}.
    \end{equation*}
    We finally get
    \begin{equation}
        \begin{aligned}
            \lambda_{p,\beta}(\Omega) &\leq \frac{\displaystyle{\int_\Omega \abs{\nabla u }^p \, dx +\beta \int_{\partial\Omega} u^p \, d\mathcal{H}^{n-1}}}{\displaystyle{\int_\Omega u^p\, dx}} \leq \frac{\displaystyle{\int_{\Omega^\star} \abs{\nabla v}^p\, dx+\beta\int_{\partial\Omega^\star} v^p \, d\mathcal{H}^{n-1}}}{\displaystyle{\int_{\Omega^\star}v^p\, dx \Bigl[1- \frac{v_m^p}{\norma{v}_p^p}(\abs{\Omega^\star}-\abs{\Omega}) \Bigr]}} \\
            & = \lambda_{p,\beta}(\Omega^\star) \, \frac{1}{\displaystyle{1- \frac{v_m^p}{\norma{v}_p^p} \, (\abs{\Omega^\star}-\abs{\Omega})}}.
        \end{aligned}
    \end{equation}
    Hence, by direct calculation
    \begin{equation}
        \label{ultima_equazione}
        \frac{\lambda_{p,\beta}(\Omega^\star)-\lambda_{p,\beta}(\Omega)}{\abs{\lambda_{p,\beta}(\Omega)}} \geq \frac{v_m^p}{\norma{v}_p^p}(\abs{\Omega^\star}-\abs{\Omega}) \qedhere
    \end{equation}
\end{proof}
\begin{oss}
    \label{modbes}
    Unlike the constant in Theorem \ref{teorema_1}, the constant $\displaystyle{C(n,p,\beta,\rho)=\frac{v_m^p\abs{\Omega^\star}}{\norma{v}_p^p}}$ cannot be  bounded from below with a constant independent of the perimeter and of $\beta$. Indeed, for example if $n=p=2$ and $P(\Omega)= 2 \pi$, we have that
    \[
    v_\beta(x)= I_0\left(\sqrt{-\lambda_\beta(B_1)} \abs{x}\right),
    \]
    where $I_0$ is the modified Bessel function.
    
    We recall that for $z$ sufficiently large (see \cite[Section 9.7]{Abramowitz_Stegun}), we have
    \[
    I_0(z) \sim \frac{e^{z}}{\sqrt{2 \pi z}} \biggl[ 1 + \frac{1}{8z} + \frac{9}{2! (8z)^2} + \ldots \biggr] 
    \]
    therefore
    \[
    \lVert v_{\beta}(x) \rVert_{L^2} \sim \frac{e^{\beta}}{\beta^2} \xrightarrow{\beta \to - \infty} +\infty
    \]
    and 
    \[
    C(2,2,\beta,2\pi)= \frac{(v_\beta)_m^2 \abs{B_1}}{\norma{v_\beta}^2_2} \sim \frac{\beta^2}{e^{- \beta } } \xrightarrow{\beta \to -\infty} 0.
    \]
\end{oss}

\begin{oss}
    We want to highlight that the constant $C$ depends actually just on $n,p$ and $\rho^{\frac{p-1}{n-1}}\beta$. Indeed, for all $\Omega \subset \R^n$ bounded and convex set with $P(\Omega) = \rho$, we can consider
    \[
    \Omega_1 = \biggl(\frac{n\omega_n}{\rho}\biggr)^{\frac{1}{n-1}} \Omega \qquad \Omega_1^* = \biggl( \frac{n\omega_n}{\rho} \biggr)^{\frac{1}{n-1}} \Omega^{\star} \qquad t : = \biggl(\frac{\rho}{n\omega_n} \biggr)^{\frac{1}{n-1}}
    \]
    so $P(\Omega_1) = P(\Omega_1^{\star}) = n \omega_n$ and we have
    \begin{align*}
        \frac{ \lambda_{p,\beta} (\Omega^{\star}) - \lambda_{p,\beta} (\Omega) }{- \lambda_{p,\beta} (\Omega)} & = \frac{ \lambda_{p,\beta} ( t\Omega_1^{\star}) - \lambda_{p,\beta} (t\Omega_1) }{- \lambda_{p,\beta} (t\Omega_1)} \\
        & = \frac{ \lambda_{ p,t^{p-1}\beta} (\Omega_1^*) - \lambda_{ p,t^{p-1}\beta} (\Omega_1) }{\lambda_{ p,t^{p-1}\beta} (\Omega_1)} \\
        & \geq C\bigl( n,p, n \omega_n, t^{p-1}\beta \bigr) \biggl(1 - \frac{n^{\frac{n}{n-1}}\omega_n^{\frac{1}{n-1}}\abs{\Omega}}{P(\Omega)^{\frac{n}{n-1}}}\biggr) \\
        & = C\Bigl( n,p, n \omega_n, \Bigl(\frac{\rho}{n\omega_n} \Bigr)^{\frac{p-1}{n-1}}\beta \Bigr) \biggl(1 - \frac{n^{\frac{n}{n-1}}\omega_n^{\frac{1}{n-1}}\abs{\Omega}}{P(\Omega)^{\frac{n}{n-1}}}\biggr) \\
        & = C \bigl( n,p,\rho^{\frac{p-1}{n-1}}\beta \bigr) \biggl(1 - \frac{n^{\frac{n}{n-1}}\omega_n^{\frac{1}{n-1}}\abs{\Omega}}{P(\Omega)^{\frac{n}{n-1}}}\biggr)
    \end{align*}
\end{oss}

\begin{proof}[Proof of Theorem \ref{teo_quantitativa_con_Hausdorff}]
    Using \eqref{ultima_equazione}, the isoperimetric inequality and
    \[
    \abs{\lambda_{p,\beta}(\Omega)} \geq \abs{\beta} \frac{P(\Omega)}{\abs{\Omega}}, 
    \]
    we have
    \begin{equation}
        \label{mod_beta_neg}
        \begin{aligned}
            \lambda_{p,\beta}(\Omega^\star)-\lambda_{p,\beta}(\Omega) & \geq\abs{\lambda_{p,\beta}(\Omega)}  \frac{v_m^p}{\norma{v}_p^p}(\abs{\Omega^\star}-\abs{\Omega}) \\
            & \geq \abs{\beta} \frac{P(\Omega)}{\abs{\Omega}}\frac{v_m^p}{\norma{v}_p^p}(\abs{\Omega^\star}-\abs{\Omega}) \\
            & \geq \abs{\beta} \frac{ n^{\frac{n}{n-1}} \omega_n^{\frac{1}{n-1}} }{\rho^{\frac{1}{n-1}}}\frac{v_m^p}{\norma{v}_p^p}(\abs{\Omega^\star}-\abs{\Omega}).
        \end{aligned}
    \end{equation}
    Now, if we suppose that $\lambda_{p,\beta}(\Omega^\star)-\lambda_{p,\beta}(\Omega) \leq \delta_0$, by \eqref{mod_beta_neg} we have
    \[
    \abs{\Omega^\star}-\abs{\Omega} \leq K(n,\rho,p,\beta) \delta_0.
    \]
    So by Lemma \ref{lemma_fugl_mod} we conclude
    \[
    \lambda_{p,\beta}(\Omega^\star)-\lambda_{p,\beta}(\Omega) \geq C(n,\rho, p , \beta) \, g(\mathcal{A}_{\mathcal{H}}^{\star} (\Omega) ). \qedhere
    \]
\end{proof}

\begin{oss}
    Our result, Theorem \ref{teo_quantitativa_con_Hausdorff}, applies only when \[
    \lambda_{p,\beta}(\Omega^\star)-\lambda_{p,\beta}(\Omega)\le \delta_0.
    \]
    It is possible to get rid of this constraint, obtaining a weaker result.
    
    In order to obtain it, we need the quantitative version of the isoperimetric inequality proved in \cite{Fusco_Maggi_Pratelli}
    \begin{equation}
        \label{fmp}
        P(\Omega) \geq P(\Omega^{\sharp}) \bigl( 1 + \gamma(n) \alpha(\Omega)^2 \bigr) \qquad \text{where } \alpha(\Omega) = \min \Set{ \frac{\abs{\Omega \triangle B_r}}{\abs{\Omega}} \, | \, \abs{B_r} = \abs{\Omega}},
    \end{equation}
    and the following result \cite[Lemma 4.2]{Esposito_Fusco_Trombetti}: there exists a constant $C(n)$ such that if $C,W$ are open and convex sets such that $\abs{C}=\abs{W}$ and $\abs{C \triangle W} < \frac{\abs{C}}{2}$, it holds
    \begin{equation}
        \label{eq_Trombetti}
        d_{\mathcal{H}}(C,W) \leq C(n) \bigl[ \diam(C) + \diam(W) \bigr] \biggl( \frac{\abs{C \triangle W}}{\abs{C}} \biggr)^{\frac{1}{n}}.
    \end{equation}
    Moreover we have to recall that
    \[
    \abs{\Omega^{\star}} = \frac{P(\Omega)^{\frac{n}{n-1}}}{n^{\frac{n}{n-1}} \omega_n^{\frac{1}{n-1}} } \qquad \text{ and } \qquad P(\Omega^{\sharp}) = n \omega_n^{\frac{1}{n}} \abs{\Omega}^{\frac{n-1}{n}}
    \]
    and we have
    \begin{align*}
        1-\frac{\abs{\Omega}}{\lvert \Omega^{\star} \rvert } & = 1 - \frac{n^{\frac{n}{n-1}} \omega_n^{\frac{1}{n-1}} \abs{\Omega}}{P(\Omega)} \\
        &\geq  1 - \frac{1}{\displaystyle{\bigl( 1+ \gamma (n) \alpha^2(\Omega) \bigr)^{\frac{n}{n-1}} }} \\
        & \geq 1 - \frac{1}{\displaystyle{\bigl( 1+\gamma(n) \alpha^2(\Omega)  \bigr) }} \\[1ex]
        & = \frac{\gamma(n) \alpha^2(\Omega)}{1+\gamma(n) \alpha^2(\Omega)},
    \end{align*}
    where we used Bernoulli's inequality
    \[
    (1+x)^r \geq 1+rx \qquad \forall x \geq - 1, \, \forall r \geq 0.
    \]
    Since $0 < \alpha^2(\Omega) < 4$ we have
    \[
    1-\frac{\abs{\Omega}}{\lvert \Omega^{\star} \rvert } \geq \frac{\gamma(n)}{1+4 \gamma(n)} \, \alpha^2(\Omega) = C(n) \alpha^2(\Omega).
    \]
    If $\Omega^{\sharp}$ is a ball that realizes the minimum in \eqref{fmp}, then using \eqref{eq_Trombetti} we obtain
    \[
    C(n) \alpha^2(\Omega) \geq C(n) \,  \frac{\displaystyle{ d_{\mathcal{H}}(\Omega,\Omega^{\sharp})^{2n} } }{\displaystyle{ \bigl[ \diam(\Omega) + \diam(\Omega^{\sharp}) \bigr]^{2n} }}
    \]
    and so
    \[
    \frac{ \lambda_{p,\beta} (\Omega^{\star}) - \lambda_{p,\beta} (\Omega) }{\abs{ \lambda_{p,\beta} (\Omega)}}\ge C(n, p, \beta \rho) C(n)  \frac{\displaystyle{ d_{\mathcal{H}}(\Omega,\Omega^{\sharp})^{2n} } }{\displaystyle{ \bigl[ \diam(\Omega) + \diam(\Omega^{\sharp}) \bigr]^{2n} }}.
    \]
\end{oss}

\addcontentsline{toc}{chapter}{Bibliografia}
\bibliographystyle{alpha}
\bibliography{biblio}

\begin{thebibliography}{BFNT19}

\bibitem[AFK17]{AFK}
Pedro R.~S. Antunes, Pedro Freitas, and David Krejčiřík.
\newblock Bounds and extremal domains for robin eigenvalues with negative
  boundary parameter.
\newblock {\em Advances in Calculus of Variations}, 10(4):357--379, 2017.

\bibitem[AFP00]{AFP}
L.~Ambrosio, N.~Fusco, and D.~Pallara.
\newblock {\em {Functions of bounded variation and free discontinuity
  problems}}.
\newblock Oxford Mathematical Monographs, 2000.

\bibitem[AS64]{Abramowitz_Stegun}
Milton Abramowitz and Irene~A. Stegun.
\newblock {\em Handbook of mathematical functions with formulas, graphs, and
  mathematical tables}.
\newblock National Bureau of Standards Applied Mathematics Series, No. 55. U.
  S. Government Printing Office, Washington, D.C., 1964.

\bibitem[Bar77]{Bareket}
Miriam Bareket.
\newblock On an isoperimetric inequality for the first eigenvalue of a boundary
  value problem.
\newblock {\em SIAM Journal on Mathematical Analysis}, 8(2):280--287, 1977.

\bibitem[BD10]{bucdan}
Dorin Bucur and Daniel Daners.
\newblock An alternative approach to the {F}aber-{K}rahn inequality for {R}obin
  problems.
\newblock {\em Calc. Var. Partial Differential Equations}, 37(1-2):75--86,
  2010.

\bibitem[BFK17]{Bucur_Freitas_Kennedy}
Dorin Bucur, Pedro Freitas, and James Kennedy.
\newblock {\em 4 The Robin problem}, pages 78--119.
\newblock De Gruyter Open Poland, 2017.

\bibitem[BFNT19]{BFNT}
Dorin Bucur, Vincenzo Ferone, Carlo Nitsch, and Cristina Trombetti.
\newblock A sharp estimate for the first {R}obin-{L}aplacian eigenvalue with
  negative boundary parameter.
\newblock {\em Atti Accad. Naz. Lincei Rend. Lincei Mat. Appl.},
  30(4):665--676, 2019.

\bibitem[BG10]{bg1}
Dorin Bucur and Alessandro Giacomini.
\newblock A variational approach to the isoperimetric inequality for the
  {R}obin eigenvalue problem.
\newblock {\em Arch. Ration. Mech. Anal.}, 198(3):927--961, 2010.

\bibitem[BG15]{bg2}
Dorin Bucur and Alessandro Giacomini.
\newblock Faber-{K}rahn inequalities for the {R}obin-{L}aplacian: a free
  discontinuity approach.
\newblock {\em Arch. Ration. Mech. Anal.}, 218(2):757--824, 2015.

\bibitem[Bha88]{Bhattacharya}
Tilak Bhattacharya.
\newblock Radial symmetry of the first eigenfunction for the {$p$}-{L}aplacian
  in the ball.
\newblock {\em Proc. Amer. Math. Soc.}, 104(1):169--174, 1988.

\bibitem[BNT10]{BNT}
B.~Brandolini, Carlo Nitsch, and Cristina Trombetti.
\newblock An upper bound for nonlinear eigenvalues on convex domains by means
  of the isoperimetric deficit.
\newblock {\em Archiv der Mathematik}, 94:391--400, 2010.

\bibitem[Bos86]{Bossel198647}
M.-H. Bossel.
\newblock Membranes élastiquement liées: extension du théorème de
  rayleigh-faber-krahn et de l'inégalité de cheeger.
\newblock {\em C. R. Acad. Sci. Paris Sér. I Math.}, 302(1):47 – 50, 1986.

\bibitem[BZ88]{Brothers1988}
John~E. Brothers and William~P. Ziemer.
\newblock Minimal rearrangements of {S}obolev functions.
\newblock {\em J. Reine Angew. Math.}, 384:153--179, 1988.

\bibitem[CL21]{CL}
S.~Cito and D.~A. La\, Manna.
\newblock A quantitative reverse {F}aber-{K}rahn inequality for the first
  {R}obin eigenvalue with negative boundary parameter.
\newblock {\em ESAIM Control Optim. Calc. Var.}, 27(suppl.):Paper No. S23, 16,
  2021.

\bibitem[Dan06]{Daners}
Daniel Daners.
\newblock A {F}aber-{K}rahn inequality for {R}obin problems in any space
  dimension.
\newblock {\em Math. Ann.}, 335(4):767--785, 2006.

\bibitem[EFT05]{Esposito_Fusco_Trombetti}
Luca Esposito, Nicola Fusco, and Cristina Trombetti.
\newblock A quantitative version of the isoperimetric inequality: the
  anisotropic case.
\newblock {\em Ann. Sc. Norm. Super. Pisa Cl. Sci. (5)}, 4(4):619--651, 2005.

\bibitem[FK15]{Freitas_Krejcirik}
Pedro {Freitas} and David {Krejcirik}.
\newblock The first robin eigenvalue with negative boundary parameter.
\newblock {\em Advances in Mathematics}, 280:322--339, 2015.

\bibitem[FMP08]{Fusco_Maggi_Pratelli}
N.~Fusco, F.~Maggi, and A.~Pratelli.
\newblock The sharp quantitative isoperimetric inequality.
\newblock {\em Ann. of Math. (2)}, 168(3):941--980, 2008.

\bibitem[Fug89]{Fuglede}
Bent Fuglede.
\newblock Stability in the isoperimetric problem for convex or nearly spherical
  domains in {${\mathbb{R}}^n$}.
\newblock {\em Trans. Amer. Math. Soc.}, 314(2):619--638, 1989.

\bibitem[Fus15]{Fusco}
Nicola Fusco.
\newblock The quantitative isoperimetric inequality and related topics.
\newblock {\em Bull. Math. Sci.}, 5(3):517--607, 2015.

\bibitem[GLPT20]{GLPT}
N.~Gavitone, D.~A. La\, Manna, G.~Paoli, and L.~Trani.
\newblock A quantitative {W}einstock inequality for convex sets.
\newblock {\em Calc. Var. Partial Differential Equations}, 59(1):Paper No. 2,
  20, 2020.

\bibitem[Lin92]{Lindqvist_on_the_equation}
Peter Lindqvist.
\newblock Addendum: ``{O}n the equation {${\rm div}(|\nabla u|^{p-2}\nabla
  u)+\lambda|u|^{p-2}u=0$}'' [{P}roc. {A}mer. {M}ath. {S}oc. {\bf 109} (1990),
  no. 1, 157--164; {MR}1007505 (90h:35088)].
\newblock {\em Proc. Amer. Math. Soc.}, 116(2):583--584, 1992.

\bibitem[Lê06]{LE_Eigenvalue}
An~Lê.
\newblock Eigenvalue problems for the p-laplacian.
\newblock {\em Nonlinear Analysis: Theory, Methods \& Applications},
  64(5):1057--1099, 2006.

\bibitem[Mag12]{maggi_sets_finite_perimeter}
Francesco Maggi.
\newblock {\em Sets of Finite Perimeter and Geometric Variational Problems: An
  Introduction to Geometric Measure Theory}.
\newblock Cambridge Studies in Advanced Mathematics. Cambridge University
  Press, 2012.

\bibitem[P\'60]{polya1960}
G.~P\'{o}lya.
\newblock Two more inequalities between physical and geometrical quantities.
\newblock {\em J. Indian Math. Soc. (N.S.)}, 24:413--419 (1961), 1960.

\bibitem[PW61]{paywie}
L.~E. Payne and H.~F. Weinberger.
\newblock Some isoperimetric inequalities for membrane frequencies and
  torsional rigidity.
\newblock {\em J. Math. Anal. Appl.}, 2:210--216, 1961.

\bibitem[Sch13]{Schneider_2013}
Rolf Schneider.
\newblock {\em Convex Bodies: The Brunn–Minkowski Theory}.
\newblock Encyclopedia of Mathematics and its Applications. Cambridge
  University Press, 2 edition, 2013.

\end{thebibliography}

\renewcommand{\abstractname}{}
\begin{abstract}

    \noindent 	\textsc{Dipartimento di Matematica e Applicazioni “R. Caccioppoli”, Università degli Studi di Napoli “Federico II”, Complesso Universitario Monte S. Angelo, via Cintia - 80126 Napoli, Italy.}
	
	\textsf{e-mail: vincenzo.amato@unina.it}
	
	\textsf{e-mail: albalia.masiello@unina.it}
	\vspace{0.5cm}
	
	\noindent \textsc{Mathematical and Physical Sciences for Advanced Materials and Technologies, Scuola Superiore Meridionale, Largo San Marcellino 10, 80126 Napoli, Italy.}
	
	\textsf{e-mail: andrea.gentile2@unina.it}
	
\end{abstract}

\end{document}